%
%
\documentclass[11pt]{article}

\usepackage{amsmath,amssymb,amsthm}
\usepackage{amsfonts}
\usepackage[mathscr]{eucal}
\pagestyle{plain}
\usepackage{color}

\setlength{\oddsidemargin}{0in} \setlength{\evensidemargin}{0in}
\setlength{\textwidth}{6.0in} \setlength{\textheight}{8.3in}
\setlength{\topmargin}{-0.5in} \setlength{\footskip}{1cm}

\begin{document}
\newtheorem{thm}{Theorem}
\numberwithin{thm}{section}
\newtheorem{lemma}[thm]{Lemma}
\newtheorem{remark}{Remark}
\newtheorem{example}[thm]{Example}
\newtheorem{corr}[thm]{Corollary}
\newtheorem{proposition}{Proposition}
\newtheorem{theorem}{Theorem}[section]
\newtheorem{deff}[thm]{Definition}
\newtheorem{case}[thm]{Case}
\newtheorem{prop}[thm]{Proposition}
\numberwithin{equation}{section}
\numberwithin{remark}{section}
\numberwithin{proposition}{section}
\newtheorem{corollary}{Corollary}[section]
\newtheorem{others}{Theorem}
\newtheorem{conjecture}{Conjecture}
\newtheorem{definition}{Definition}[section]
\newtheorem{cl}{Claim}
\newtheorem{cor}{Corollary}
\newcommand{\ds}{\displaystyle}
\newcommand{\alert}[1]{\fbox{#1}}
\newcommand{\stk}[2]{\stackrel{#1}{#2}}
\newcommand{\dwn}[1]{{\scriptstyle #1}\downarrow}
\newcommand{\upa}[1]{{\scriptstyle #1}\uparrow}
\newcommand{\nea}[1]{{\scriptstyle #1}\nearrow}
\newcommand{\sea}[1]{\searrow {\scriptstyle #1}}
\newcommand{\csti}[3]{(#1+1) (#2)^{1/ (#1+1)} (#1)^{- #1
 / (#1+1)} (#3)^{ #1 / (#1 +1)}}
\newcommand{\RR}[1]{\mathbb{#1}}
\def\Re {{\rm Re}\,}
\def\Im {{\rm Im}\,}
\def\E{{\mathbb E}}
\def\P{{\mathbb P}}
\def\R{{\mathbb R}}
\newcommand{\rd}{{\mathbb R^d}}
\def\Z{{\mathbb Z}}
\def\N{{\mathbb N}}
\def\ep{\varepsilon}
\def\l{{\langle}}
\def\r{\rangle}
\def\La{\Lambda}
\def\si{\sigma}
\def\ga{\gamma}
\def\Ga{\Gamma}
\def\la{\lambda}

\def\M{{\EuScript M}}
\def\EN{{\EuScript{E}}}

\thispagestyle{empty}
\begin{titlepage}
\title{\bf  $\alpha$-Time
Fractional Brownian Motion: PDE Connections and Local Times}
\author{Erkan Nane\\
Auburn University\\
\and Dongsheng Wu\\
University of Alabama in Huntsville\\
\and
Yimin Xiao
\thanks{Research partially
supported by NSF grant DMS-1006903.}\\
Michigan State University }
\maketitle

\begin{abstract}
\noindent {\it
For $0<\alpha \leq 2$ and $0<H<1$, an $\alpha$-time fractional
Brownian motion is an iterated process $Z = \{Z(t)=W(Y(t)), t \ge 0\}$
obtained by taking a fractional Brownian motion $\{W(t), t\in \RR{R} \}$
with Hurst index $0<H<1$ and replacing the time parameter with a
strictly $\alpha$-stable L\'evy process $\{Y(t), t\geq 0 \}$ in
$\RR{R}$ independent of $\{W(t), t \in \R\}$. It is shown that such
processes have natural connections to partial differential equations
and, when $Y$ is a stable subordinator, can arise as scaling limit
of randomly indexed random walks. The existence, joint continuity
and sharp H\"older conditions in the set variable of the local times
of a $d$-dimensional $\alpha$-time fractional Brownian motion $X =
\{X(t), t \in \R_+$\} defined by
$
X(t)=\big(X_{1}(t),\cdots , X_{d}(t) \big),
$
where $t\geq 0$  and $X_{1},\cdots , X_{d}$ are independent copies
of $Z$, are investigated. Our methods rely on the strong local '
nondeterminism of fractional Brownian motion.
}
\end{abstract}

\textbf{Key words:} Fractional Brownian motion, strictly
$\alpha$-stable L\'evy process, $\alpha$-time Brownian motion,
$\alpha$-time fractional Brownian motion, partial differential equation,
local time, H\"older condition.
\newline

\textbf{Mathematics Subject Classification (2000):} 60G17, 60J65,
60K99.
\end{titlepage}

\section{Introduction}

In recent years, iterated Brownian motion and related iterated
processes have received much research interest. Such iterated
processes are connected naturally with partial differential equations
and have interesting probabilistic and statistical features such
as self-similarity, non-Markovian dependence structure,
non-Gaussian distributions; see \cite{allouba1,burdzy1,burdzy-khos,
BK98,deblassie,dovidio-orsingher,nane1,nane2,OB1,xiao}
and references therein for further information. Inspired by these
results, we consider a new class of iterated processes called
$\alpha$-time fractional Brownian motion (fBm) for $0<\alpha \leq 2$
and $0<H<1$. These are obtained by taking a fractional Brownian motion of
index $H$ and replacing the time parameter with  a strictly
$\alpha$-stable L\'evy process $Y$. More precisely, let
$W = \{W(t), t \in \R\}$ be a fractional Brownian motion
in $\RR{R}$ with index $H$, which is a centered, real-valued
Gaussian process with covariance function
$$
\E\big(W (t) W(s) \big) = \frac1 2\big(|t|^{2H} +
|s|^{2 H} - |t - s|^{2 H}\big)
$$
and $W(0) = 0$ a.s. Here and in the sequel,
$|\cdot|$ denotes the Euclidean norm. Let $Y = \{Y(t),
t \ge 0\}$ be a real-valued strictly $\alpha$-stable L\'evy process,
$0<\alpha \leq 2$, starting from $0$; see Section 3 for its definition
and \cite{bertoin, sato} for further information. We assume that $W$
and $Y$ are independent. Then a real-valued $\alpha$-time fractional
Brownian motion $Z= \{Z(t), t \ge 0\}$ is defined by
\begin{equation}\label{definition}
Z(t)\equiv W(Y(t)), \ \ \ t\geq 0.
\end{equation}

For $\alpha =2$ and $H=1/2$, this is the iterated Brownian
motion of Burdzy \cite{burdzy1}. When $0<\alpha<2$ and
$H=1/2$, $Z$ is called an $\alpha$-time Brownian motion
by Nane \cite{nane-alpha}. Aurzada and Lifshits
\cite{aurzada-lifshits} and Linde and Shi \cite{linde-shi}
studied the small deviation problem for real-valued $\alpha$-time
Brownian motion. Nane \cite{nane-lil} studied laws of the iterated
logarithm for a version of $Z$. Moreover, when $Y$ is symmetric,
for $\alpha =1,\,2$ and $H=1/2$ these processes have
connections with partial differential
operators as described in \cite{allouba1,nane3}.

More generally, it is easy to verify that the process $Z$ has
stationary increments and is a self-similar process of index
$H/\alpha$. The latter means that, for every constant $c>0$, the
processes $\{Z(t):\ t\geq 0 \}$ and $\{c^{-H/\alpha}Z(c\,t):\ t\geq 0\}$
have the same finite-dimensional distributions. Gaussian and stable
self-similar processes have been studied extensively in recent years;
see Samorodnitsky and Taqqu \cite{ST94}, Embrechts and Maejima
\cite{EM02} for further information. The $\alpha$-time fractional
Brownian motions form an important class of non-Markovian and
non-stable self-similar processes, except in the special case
when $H = 1/2$ and $Y$ is a stable subordinator [In this case,
$Z$ is a symmetric stable L\'evy process]. As will be shown in
this paper, they have natural connections to partial differential
equations and can arise as scaling limit of randomly indexed random
walks with dependent jumps. Hence they can serve as useful stochastic
models in many scientific areas including physics, insurance risk
theory and communication networks. Moreover, because they are
non-Markovian and have non-stable distributions, new methods are
often needed in order to study their properties.

When $\alpha < 2$, the sample function of the $\alpha$-time
fractional Brownian motion $Z$ is not continuous and its irregularity
is closely related to those of $W$ and $Y$. One of our motivations
of this paper is to characterize the irregularity of $Z$
in terms of the parameters $H$ and $\alpha$. We do this by
studying the existence and regularity of the local times of
$\alpha$-time fractional Brownian motion
$X = \{X(t), t \ge 0\}$ with values in $\R^d$ defined by
\begin{equation}\label{multi-alpha}
X(t)=\big(X_{1}(t),\cdots , X_{d}(t) \big),\ \ \ \ \ \ (t\geq 0),
\end{equation}
where $X_{j}= W_j(Y_j(t))$ ($j=1, \cdots, d$). We assume that
$W_1, \cdots,W_d$ are independent copies of $W$, $Y_1,
\cdots, Y_d$ are independent copies of $Y$, and $\{W_j\}$
and $\{Y_j\}$ are independent. We will call $X= \{X(t),
t \ge 0\}$ a $d$-dimensional $\alpha$-time fractional Brownian
motion. It is clear that $X$ is also self-similar of index
$H/\alpha$ and has stationary increments.

The rest of this paper is organized as follows. In Section 2, we
study the PDE connections of $\alpha$-time
fractional Brownian motions, and prove that they can be
obtained as scaling limit of randomly indexed random walks
with dependent jumps. These results provide some analytic and
physical interpretations for $\alpha$-time fractional Brownian
motions.

In Sections 3 and 4 we investigate the existence, joint continuity
and sharp H\"{o}lder conditions in the set variable of the local
times of a $d$ dimensional $\alpha$-time fractional Brownian motion
$X$. In the special case of $d=1$, $W$ is Brownian motion and
$Y$ is a symmetric $\alpha$-stable L\'evy process with $\alpha > 1$,
the existence and joint continuity of the local time of $Z$ have
been proved by Nane \cite{nane-alpha}. The methods used in this
paper differ from those of Nane \cite{nane-alpha}. The latter uses
the existence of local times of Brownian motion in $\RR{R}$ as
well as the existence of local time of symmetric stable L\'evy
processes, which does not exist whenever $\alpha \leq 1$ or $d > 1$.
Our Theorem \ref{fourier} implies that, for the $\alpha$-time
Brownian motion $X$ in $\R^d$,  local times exist in
the case $d=1$ for $\alpha>1/2$;  in the case $d=2$ for
$\alpha >1$ and in the case $d=3$ for $\alpha>3/2$. Moreover,
Theorem \ref{main-thm-1} shows that these local times have
jointly continuous versions.

The methods of  Sections 3 and 4 rely on the Fourier analytic
argument of Berman \cite{Berman69, Berman73} and a chaining
argument in Ehm \cite{ehm}. In order to derive crucial moment
estimates for the local times, we make use of the strong local
nondeterminism (SLND) of fractional Brownian motion proved
by Pitt \cite{Pitt78} as well as several nontrivial
modifications of the arguments in Xiao \cite{xiao2, xiao}.

Finally, we provide some technique lemmas as an Appendix in Section 5.

Throughout the paper, we will use $K$ to denote an unspecified positive
finite constant which may not necessarily be the same in each occurrence.

\section{PDE connections and scaling limits of randomly
indexed random walks}

In this section we show that $\alpha$-time fractional
Brownian motions have natural connections to partial
differential equations. They may also arise as scaling
limit of randomly indexed random walks with dependent jumps.
These results provide some analytic and physical interpretations
of $\alpha$-time fractional Brownian motions. These results
show that $\alpha$-time fBm can serve as useful stochastic model
in various scientific areas.

\subsection{PDE connections}



The domain of the infinitesimall generator $\mathcal{A}$ of
 a semigroup $T(t)$ defined on a Banach space $\mathcal{H}$ is the set
 of all $\varphi\in \mathcal{H}$ such that the limit
 $$\lim _{t\to 0}\frac{T(t)\varphi(x)-\varphi(x)}{t}$$
exists in the strong norm of $\mathcal{H}$.

Let $\Delta =\sum_{j=1}^{d}\frac{\partial^2 }{\partial x_j^2}$
be the Laplacian operator, and let $\delta(x)$ be the Dirac-delta function.
The density of Brownian motion $W$ in $\R^d$ is  $f(t,x)=\frac{1}
{(2\pi t)^{d/2}}e^{-|x|^2/2t}$. Let
$$T(t)\varphi(x)=\int_\rd f(t, x-y)\varphi(y)dy
$$
be the semigroup of Brownian motion on $L^2(\rd)$.
Then the generator of $T(t)$ is $\Delta$ with the domain $Dom (\Delta)
=\{\varphi \in L^2(\rd): \nabla \varphi\in L^2(\rd)\}$, where $\nabla \varphi$ is the
weak derivative of $\varphi$. See Section 31 in Sato \cite{sato} for more
details and semigroups on other Banach spaces.
Let $\varphi$ be a function in the domain of the Laplacian.
Then the function $
u(t,x)=T(t)\varphi(x)
$ is  a solution of the heat equation
\begin{equation}
\begin{split}
\frac{\partial}{\partial t}u(t,x)&=\frac12\Delta u(t,x),  \ \ \ t > 0,\, x\in \rd,\\
u(0,x)&=\varphi (x),\ \ \  x\in \rd.
\end{split}\end{equation}

In the case $H= \frac 1 2$ and $Y(t)$ is stable subordinator of index
$\beta/2$, $0<\beta\leq 2$ with $\E(e^{-sY(t)})=e^{-ts^{\beta/2}}$,
$W(Y(t))$ is a symmetric stable process of index $\beta$ in $\R^d$. The
density of  $W(Y(t))$ is given by
$$
q(t,x)=\int _0^\infty f(s,x) p_t(s)\, ds
=\int _0^\infty \frac{e^{-|x|^2/2 s}}{(2\pi s)^{d/2}}p_t(s)ds,
$$
where $p_t(s)$ is the density of $Y(t).$ Then the function
$$
u(t,x)=\E_{x}[\psi(W(Y(t)))]=\int_0^\infty [T(s)\psi(x)]p_t(s)ds
$$
is  a solution of
\begin{eqnarray}
\frac{\partial}{\partial t}u(t,x)\ & = &
-2^{-\beta/2}(-\Delta)^{\beta/2} u(t,x),
\ \ \ \ \  \ t>0, \ \ x\in \rd\label{symmetric-stable-pde}\\
u(0,x)& = & \psi(x), \ \ \ \ \ x\in \rd,\nonumber
\end{eqnarray}
where $-(-\Delta)^{\beta/2}$ is the fractional Laplacian
with Fourier transform
$$
\int_{\R^d}e^{i\l k, x \r}[-(-\Delta)^{\beta/2}\psi(x)]dx=-|k|^\beta
\int_{\rd}e^{i \l k , x\r}\psi(x)dx,
$$
for functions  $\psi$ in the domain of the fractional Laplacian,
see \cite[Theorem 31.5 and Example 32.6]{sato}.

For the case of $H=\frac12$ and $\alpha=2$,
Allouba and Zheng \cite{allouba1} and  DeBlassie \cite{deblassie}
showed that, for any function $\varphi$ in the domain of the Laplacian,
the function $u(t,x)=\E_{x}[\varphi(W(Y(t)))] $
solves the Cauchy problem
\begin{eqnarray}
\frac{\partial}{\partial t}u(t,x)\ & = &
\frac{{\Delta}\varphi(x)}{\sqrt{8\pi t}}\ + \
\frac18{\Delta}^{2}u(t,x),
\ \ \ \ \  \ t>0, \ \ x\in \rd\nonumber\\
u(0,x)& = & \varphi(x), \ \ \ \ \ x\in \rd.\nonumber
\end{eqnarray}
In this case $u(t,x)=\E_x[\varphi(W(Y(t)))]$ also solves the fractional
Cauchy problem
\begin{eqnarray}
\frac{\partial^{\frac12} }{\partial t^{\frac12} }u(t,x) &=&2^{-3/2}
\Delta u(t,x);  \  \ x\in \rd, \ t>0\nonumber\\
u(0,x)& =& \varphi(x), \ x\in \rd.\nonumber
\end{eqnarray}
Here $\frac{\partial^{\frac12}  }{\partial t^{\frac12}}u(t,\cdot)$
is the Caputo fractional derivative with respect to $t$ of order $\frac12$,
defined by (for fixed $x\in \rd$)
\begin{equation}\label{CaputoDef}
\frac{\partial^{\frac12}}{\partial t^{\frac12}}u(t,x)=\frac{1}
{\sqrt{\pi}}\int_0^t \bigg[ \frac{\partial u(s,x)}{\partial s}\bigg]\frac{ds}
{(t-s)^{\frac12}}=\frac{1}{\Gamma(1-\frac12)}
\int_0^t \bigg[ \frac{\partial u(s,x)}{\partial s}\bigg]\frac{ds}{(t-s)^{\frac12}},
\end{equation}
see \cite{bmn-07}.

For the case $H=\frac1 2$, $\alpha=1$ and $Y$ is a symmetric Cauchy process,
Nane \cite{nane3} showed that $u(t,x)=\E_x[\varphi(W(Y(t)))]$ solves
\begin{eqnarray}
\frac{\partial^{2}}{\partial t^{2}}u(t,x)\ & = &-\frac{\Delta
\varphi(x)}{\pi t}\, - \, \frac14\Delta ^{2} u(t,x),
\ \ \ \ \  \ t>0, \ \ x\in \rd,\label{PDE-CONNECT3}\\
 u(0,x) & = & \varphi(x), \ \ \ \ \ x\in \rd,\nonumber
\end{eqnarray}
where $\varphi$ is a bounded measurable function in the domain of
the Laplacian, with $\frac{\partial^2\varphi} {\partial x_i\partial x_j}$
bounded and H\"{o}lder continuous for all $1\leq i,\ j \leq d$.

Nane \cite{nane3} has also established pde connection
for the case $H=\frac12$, and $\alpha=\frac km$ for relatively
prime integers $k,m$, see Theorem 2.5 in \cite{nane3}.

For the case $0<H<1$, $d=1$ and $\alpha=2$, D'ovidio and
Orsingher \cite{dovidio-orsingher} established the
fact that the density of $Z(t)= W(Y(t))$
$$q(t,x)=2\int_0^\infty \frac{e^{-\frac{x^2}{2s^{2H}}}}{\sqrt{2\pi s^{2H}}}
\frac{e^{-\frac{s^2}{2t}}}{\sqrt{2\pi t}}ds$$
is a solution of the first order PDE
\begin{equation}\label{pde-h-alpha-2}
t\frac{\partial q(t,x)}{\partial t}=-\frac H2 \frac{\partial}
{\partial x}(xq(t,x)), \ \ \ t>0,\, x\in \R.
\end{equation}

Let  $p_t(s)$ be the density function of the symmetric
Cauchy process $Y (t)$ and let $\delta (x)$ denote the Dirac-delta function.
The following theorem answers a question in \cite{dovidio-orsingher} and we
prove it and Theorem \ref{thm-pde-h-stable} below for the more general case
that $W$ is a fractional Brownian motion with values in $\R^d$.
\begin{theorem}\label{thm-pde-h-alpha-1}
In the case $0<H<1$ and $\alpha=1$ the density function
$$q(t,x)=2\int_{0}^\infty f^H(s,x)p_t(s)ds=2\int_0^\infty \frac{e^{-\frac{|x|^2}{2s^{2H}}}}
{(2\pi s^{2H})^{d/2}}\frac{t}{\pi(t^2+s^2)}ds$$
 of $W(Y(t))$ solves the PDE
\begin{equation}\label{h-alpha-1}\begin{split}
\frac{\partial^2 q(t,x)}{\partial t^2}
&=-\frac{2H I_{(0,1/2]}(H)}{\pi t}\Delta \delta (x)-H(2H-1) \Delta G_{(2H-2),t} q(t,x)
\\
&\qquad \qquad \ \ \ -H^2 \Delta^2 G_{(4H-2), t}q(t,x), \ \ \ \
x\in \rd, t>0,
\end{split}\end{equation}
where
$$
G_{\gamma, t} q(t,x)=2\int_{0}^\infty
s^\gamma p_t(s) f^H(s,x)ds, \ \ \gamma\neq 0,
$$
and $G_{0,t}$ is the identity operator.
\end{theorem}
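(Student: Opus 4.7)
The plan is to differentiate $q(t,x)=2\int_0^\infty f^H(s,x)\,p_t(s)\,ds$ twice in $t$ under the integral sign, convert the resulting $t$-derivatives of $p_t$ into $s$-derivatives by using the fact that $p_t(s)=t/[\pi(t^2+s^2)]$ is (up to normalization) the Poisson kernel of the upper half-plane and hence harmonic in $(t,s)$, and then integrate by parts twice in $s$. A boundary term at $s=0^+$ will produce the $\Delta\delta$ contribution appearing in \eqref{h-alpha-1}, while the remaining integral will be identified with $-H(2H-1)\Delta G_{2H-2,t}q-H^2\Delta^2 G_{4H-2,t}q$.

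The computation uses three elementary identities. A direct calculation on the Gaussian density yields
\begin{align*}
\frac{\partial f^H}{\partial s}(s,x)&=H\,s^{2H-1}\Delta f^H(s,x),\\
\frac{\partial^2 f^H}{\partial s^2}(s,x)&=H(2H-1)\,s^{2H-2}\Delta f^H(s,x)+H^2\,s^{4H-2}\Delta^2 f^H(s,x).
\end{align*}
The Cauchy density $p_t(s)$ is harmonic, so $(\partial_t^2+\partial_s^2)p_t=0$ for $t,s>0$, and $\partial_s p_t(s)|_{s=0}=0$. Using these identities, differentiating under the integral, and integrating by parts twice in $s$ gives
\begin{align*}
\frac{\partial^2 q}{\partial t^2}(t,x)&=-2\int_0^\infty f^H(s,x)\,\frac{\partial^2 p_t(s)}{\partial s^2}\,ds\\
&=-\frac{2H}{\pi t}\lim_{s\to 0^+}s^{2H-1}\Delta f^H(s,x)\;-\;2\int_0^\infty p_t(s)\,\frac{\partial^2 f^H}{\partial s^2}(s,x)\,ds,
\end{align*}
where the first IBP has no boundary contribution (at $s=0$ because $\partial_s p_t(0)=0$, at $s=\infty$ by rapid decay of $f^H$) and the second IBP contributes only at $s=0^+$. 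Substituting the formula for $\partial_s^2 f^H$ and pulling $\Delta$ and $\Delta^2$ outside the $s$-integral identifies the remaining integral as precisely $-H(2H-1)\Delta G_{2H-2,t}q(t,x)-H^2\Delta^2 G_{4H-2,t}q(t,x)$.

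The main obstacle is the rigorous interpretation of the boundary limit $\lim_{s\to 0^+}s^{2H-1}\Delta f^H(s,x)$. Since $f^H(s,\cdot)$ is a Gaussian of variance $s^{2H}$, a short scaling/Taylor-expansion argument shows that $\Delta f^H(s,\cdot)\to\Delta\delta$ distributionally as $s\downarrow 0$. For $H>1/2$ the prefactor $s^{2H-1}$ vanishes, so the boundary term is zero and $I_{(0,1/2]}(H)=0$; for $H=1/2$ the limit is exactly $\Delta\delta(x)$, giving the coefficient $-1/(\pi t)$ in agreement with \eqref{PDE-CONNECT3}. For $0<H<1/2$ both the boundary limit and the integral defining $G_{2H-2,t}q(t,x)$ are individually singular at $s=0$, but their singular parts cancel. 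I would justify this by truncating the $s$-integral at $\ep>0$, performing the IBPs on $[\ep,\infty)$, pairing the truncated identity with a test function $\varphi\in C_c^\infty(\rd)$, and using the expansion $\int f^H(s,x)\Delta\varphi(x)\,dx=\Delta\varphi(0)+O(s^{2H})$ to show that the $O(\ep^{2H-1}\Delta\varphi(0))$ divergences from the boundary and from $\int_\ep^\infty s^{2H-2}p_t(s)\,ds$ cancel exactly, leaving the finite residue $-\frac{2H}{\pi t}\Delta\varphi(0)$. Collecting all contributions then yields \eqref{h-alpha-1}.
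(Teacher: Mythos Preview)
Your proposal is correct and follows essentially the same route as the paper: differentiate under the integral, use the harmonicity $(\partial_t^2+\partial_s^2)p_t=0$ of the Cauchy kernel, integrate by parts twice in $s$ with the identity $\partial_s f^H=Hs^{2H-1}\Delta f^H$, and read off the boundary contribution at $s=0^+$. Your treatment of the $0<H<1/2$ case---truncating at $\ep$, pairing with a test function, and cancelling the $\ep^{2H-1}$ divergences between the boundary term and the singular part of $G_{2H-2,t}q$---is in fact more careful than the paper's own argument, which simply asserts $\lim_{s\to 0}s^{2H-1}\Delta f^H(s,x)=\Delta\delta(x)$ for $0<H\le 1/2$ without further comment.
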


An operator similar to the operator $G_{\gamma, t}$ was introduced in
\cite{hahn-kobayashi-umarov}. We refer to their Proposition 3.6 and
Remark 3.7 for some nice properties of that operator. For the case $H\neq 1$,
it might be a challenging problem to find the right class of functions
$\phi$ and establish the Cauchy problem that is solved by  $u(t,x)=
\E_x(\phi(W(Y(t))))$. This is due to the fact that $v(t,x)=\E_x(\phi(W(t)))$
is not a semigroup on a Banach space. The general theory of semigroups and
their generators will not apply in this case

\begin{proof}
Recall that the density function of symmetric Cauchy process $Y(t)$ is
$$
p_t(s)=\frac{t}{\pi(t^2+s^2)}, \ \ t\geq 0,\ s\in \R.
$$
Since
$$
\frac{\partial^{2}}{\partial
t^{2}}p_t(s)=\frac{-2t(3s^2-t^2)}{(t^2+s^2)^3}
$$
and for $t>0$
$$
2\int_{0}^{\infty}f^H(s,x)\bigg|\frac{\partial^{2}}{\partial t^{2}}p_t(s)
\bigg|ds=2\int_{0}^{\infty}\frac{e^{-\frac{|x|^2}{2s^{2H}}}}
{(2\pi s^{2H})^{d/2}}\bigg|\frac{-2t(3s^2-t^2)}{(t^2+s^2)^3}\bigg|ds<\infty,
$$
we apply the Dominated Convergence Theorem to verify the following interchange
of the second derivative in $t$:
\begin{equation}\label{Eq:27}
\frac{\partial^{2}}{\partial t^{2}}q(t,x) =
2\int_{0}^{\infty}f^H(s,x)\frac{\partial^{2}}{\partial t^{2}}p_t(s)ds.
\end{equation}

By using  integration by parts to  (\ref{Eq:27}) and the facts
\begin{equation}\label{pde-cauchy-fractional-bm}
\begin{split}
&\bigg(\frac{\partial^{2}}{\partial s^{2}}+\frac{\partial^{2}}{\partial
t^{2}}\bigg)p_t(s)=0;\\
&\frac{\partial}{\partial s}f^H(s,x)=H s^{2H-1}
\Delta f^H (s,x),
\end{split}
\end{equation}
we derive
\begin{eqnarray}
\frac{\partial^{2}}{\partial t^{2}}q(t,x)& =&
-2\int_{0}^{\infty}f^H(s,x)\frac{\partial^{2}}{\partial s^{2}}p_t(s)ds\nonumber\\
&=&-2f^H(s,x)\frac{\partial}{\partial s}p_t(s)\Big|_{0}^\infty +
2\int_{0}^{\infty}\frac{\partial}{\partial s}f^H(s,x)\frac{\partial}{\partial s}p_t(s)ds\nonumber\\
&=&2p_t(s)\frac{\partial}{\partial s}f^H(s,x)\Big|_{0}^\infty -
2\int_{0}^{\infty}p_t(s)\frac{\partial^{2}}{\partial s^{2}}f^H(s,x)  ds\nonumber\\
&=&2p_t(s)\frac{\partial}{\partial s}f^H(s,x)\Big|_{0}^\infty+2\int_{0}^{\infty}
p_t(s)\frac{\partial}{\partial s}\bigg( H s^{2H-1}\Delta f^H(s,x) \bigg) ds\nonumber\\
&=&-\frac{2H I_{(0,1/2]}(H)}{\pi t}\Delta \delta (x)\nonumber\\
&&-2\int_{0}^{\infty}p_t(s)\bigg(H(2H-1)s^{2H-2}\Delta f^H(s,x)+H^2s^{4H-2}\Delta^2f^H(s,x)\bigg)ds.\nonumber\\
&=&-\frac{2H I_{(0,1/2]}(H)}{\pi t}\Delta \delta (x)\nonumber\\
&&-\Delta 2\int_{0}^{\infty}p_t(s)H(2H-1)s^{2H-2}f^H(s,x)ds +\Delta^2 2\int_{0}^{\infty}p_t(s) H^2s^{4H-2}f^H(s,x)ds.\nonumber
\end{eqnarray}
the last line follows by the dominated convergence theorem.
In the above we have used that
\begin{equation}\begin{split}
\lim _{s\to 0}f^H(s,x)\frac{\partial}{\partial s}p_t(s)&=0,\\
\lim _{s\to \infty}f^H(s,x)\frac{\partial}{\partial s}p_t(s)&=0,\\
\lim _{s\to \infty}p_t(s)\frac{\partial}{\partial s}f^H(s,x)&=0,
\end{split}
\end{equation}
and that
\begin{equation}
\begin{split}
\lim _{s\to 0}p_t(s)\frac{\partial}{\partial s}f^H(s,x)
&=\lim _{s\to 0}\frac{H}{\pi t}s^{2H-1}\Delta f^H(s,x)\nonumber\\
&=\left\{
\begin{array}{c}
  0\quad\qquad\qquad\quad\mbox{ if } \, H>\frac12, \\
  \frac{H}{\pi t}\Delta \delta(x) \quad\mbox{ if } \, 0<H\leq \frac12.
\end{array}
\right.\nonumber
\end{split}
\end{equation}
This finishes the proof of \eqref{h-alpha-1}.
\end{proof}
\begin{remark}
After we submitted our paper, we learned that Beghin et al. \cite{beghin-et-al}
has established that the density  $W(Y(t))$ in the case  $d=1$, $0<H<1, \alpha=1$
$$
q(t,x)=2\int_0^\infty\frac{e^{-\frac{x^2}{2s^{2H}}}}{\sqrt{2\pi s^{2H}}}\frac{t}{\pi(t^2+s^2)}ds
$$
solves
\begin{equation}\label{second-order-h-alpha-1}
\frac{\partial^{2}}{\partial t^{2}}q(t,x)=-\frac{1}{t^2}\bigg[H(H-1)
\frac{\partial}{\partial x}x-H^2\frac{\partial^2}{\partial x^2}x^2\bigg]
q(t,x)-\frac{2H I_{(0,1/2]}(H)}{\pi t}\frac{\partial^2 \delta (x)}{\partial x^2}.
\end{equation}
It is interesting to compare the equations \eqref{h-alpha-1} and
\eqref{second-order-h-alpha-1}.
\end{remark}

Let $0<\beta=\frac km <2$ for $k,m$ relatively prime integers,
and let $Y(t)$ be a stable subordinator of index $\beta/2$.
In this case the density $p_t(s)$ is a solution of
\begin{equation}\label{pde-stable-sub}
\frac{\partial^{2m}}{\partial t^{2m}}p_t(s)
=\frac{\partial^k}{\partial s^k}p_t(s), \ \ s, t>0,
\end{equation}
see Lemma 3.1 in \cite{deblassie2}. We have the next theorem which
gives an extension of the PDE in \eqref{symmetric-stable-pde}.
\begin{theorem}\label{thm-pde-h-stable}
Let $0<H<1$, $\beta=\frac 1m$ for $m=2,3,\cdots$. Let $Y(t)$
be a stable subordinator of index $\alpha= \frac{\beta} 2$.
Then the density $q(t,x)=\int_0^\infty f^H(s,x)p_t(s)ds$
of $W(Y(t))$ is a solution of
\begin{equation}
\frac{\partial^{2m}}{\partial t^{2m}}\,q(t,x)
=-H\Delta V_{(2H-1),t}\,  q(t,x),
\end{equation}
where $V_{\gamma, t}\, q(t,x)=
\int_0^\infty s^\gamma p_t(s) f^{H}(s,x)
p_t(s)ds$ for $\gamma\neq 0$ and $V_{0,t}$ is the identity operator.
\end{theorem}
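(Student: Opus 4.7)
The plan is to follow the template of the proof of Theorem~\ref{thm-pde-h-alpha-1}, replacing the harmonicity identity $(\partial_s^2 + \partial_t^2)p_t(s) = 0$ by the PDE \eqref{pde-stable-sub}. Specializing to $\beta = 1/m$ forces $k = 1$, so \eqref{pde-stable-sub} reduces to
\begin{equation*}
\frac{\partial^{2m}}{\partial t^{2m}} p_t(s) = \frac{\partial}{\partial s} p_t(s), \qquad s,t > 0,
\end{equation*}
and hence only a single integration by parts in $s$ will be required; this is presumably why the statement is restricted to $k = 1$.

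The steps, in order, are the following. First, I would interchange $\partial_t^{2m}$ with the $s$-integral defining $q(t,x)$ to obtain
\begin{equation*}
\frac{\partial^{2m}}{\partial t^{2m}} q(t,x) = \int_0^\infty f^H(s,x)\, \frac{\partial^{2m}}{\partial t^{2m}} p_t(s)\, ds,
\end{equation*}
using dominated convergence on difference quotients; a pointwise bound on $|\partial_t^{2m} p_t(s)|$ uniform for $t$ in a small interval around any fixed $t_0 > 0$ follows from the scaling relation $p_t(s) = t^{-1/\alpha} p_1(s t^{-1/\alpha})$ together with the standard asymptotics of $p_1$. Second, I would substitute the subordinator PDE and integrate by parts once in $s$, producing
\begin{equation*}
\frac{\partial^{2m}}{\partial t^{2m}} q(t,x) = \bigl[f^H(s,x) p_t(s)\bigr]_0^\infty - \int_0^\infty p_t(s)\, \partial_s f^H(s,x)\, ds.
\end{equation*}
Third, I would insert the identity $\partial_s f^H(s,x) = H s^{2H-1} \Delta f^H(s,x)$ recorded in \eqref{pde-cauchy-fractional-bm} and pull $\Delta$ outside the integral to conclude
\begin{equation*}
\frac{\partial^{2m}}{\partial t^{2m}} q(t,x) = -H \Delta \int_0^\infty s^{2H-1} f^H(s,x) p_t(s)\, ds = -H \Delta V_{(2H-1),t}\, q(t,x).
\end{equation*}

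The main obstacle is verifying that the boundary contribution $\bigl[f^H(s,x) p_t(s)\bigr]_0^\infty$ vanishes. At $s = \infty$ both factors decay and the product tends to $0$ without difficulty. At $s \downarrow 0$, however, $f^H(s,0) = (2\pi s^{2H})^{-d/2}$ blows up, and more generally $f^H(s,x)$ is singular in the $s$-variable when $x$ is close to the origin; the rescue is that for a stable subordinator of index $\alpha = 1/(2m) < 1$, the density $p_t(s)$ decays superexponentially as $s \downarrow 0$, of order $\exp(-c_t\, s^{-\alpha/(1-\alpha)})$, which dominates any algebraic blow-up of $f^H(s,x)$. Unlike in the proof of Theorem~\ref{thm-pde-h-alpha-1}, no Dirac-$\delta$ correction term should appear here, because $p_t(s)$ and all its derivatives vanish at the origin. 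Once this boundary-vanishing is established, together with the integrability needed to justify step one, the rest is a direct substitution.
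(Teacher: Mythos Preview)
Your proposal is correct and follows essentially the same route as the paper: interchange $\partial_t^{2m}$ with the $s$-integral, apply \eqref{pde-stable-sub} with $k=1$, integrate by parts once in $s$, insert the identity $\partial_s f^H = H s^{2H-1}\Delta f^H$ from \eqref{pde-cauchy-fractional-bm}, and pull $\Delta$ outside by dominated convergence. Your treatment of the boundary term at $s\downarrow 0$ via the superexponential decay of the subordinator density is in fact more explicit than the paper's, which simply refers to equations (2.7)--(2.10) of DeBlassie \cite{deblassie2} for the vanishing of the boundary contributions.
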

\begin{proof}
The proof follows by integration by parts as in the proof of Theorem
\ref{thm-pde-h-alpha-1}, and by using \eqref{pde-stable-sub} with $k=1$.
\begin{eqnarray}
\frac{\partial^{2m}}{\partial t^{2m}}\,q(t,x)& = &
\int_{0}^{\infty}f^H(s,x)\frac{\partial^{2m}}{\partial t^{2m}}p_t(s)ds\nonumber\\
&= &\int_{0}^{\infty}f^H(s,x)\frac{\partial}{\partial s}p_t(s)ds\nonumber\\
&=&f^H(s,x)p_t(s)\Big|_{0}^\infty -\int_{0}^{\infty}\frac{\partial}
{\partial s}f^H(s,x)p_t(s)ds\nonumber\\
&=&f^H(s,x)p_t(s)\Big|_{0}^\infty-\int_{0}^{\infty}p_t(s)
\bigg( H s^{2H-1}\Delta f^H(s,x) \bigg)\, ds\nonumber\\
&=&-H
\int_{0}^{\infty}p_t(s) s^{2H-1}\Delta f^H(s,x)\,ds,\nonumber\\
&=&-H
\Delta\int_{0}^{\infty}p_t(s) s^{2H-1} f^H(s,x)\,ds,\nonumber
\end{eqnarray}
the last line follows by dominated convergence theorem.
See equations (2.7)--(2.10) in \cite{deblassie2} to show that the
boundary terms are all zero.
\end{proof}

Letting $H=\frac12$ in Theorem \ref{thm-pde-h-stable}, the density
$q(t,x)$ of symmetric stable process $W(Y(t))$ of index $\alpha=\frac 1m$
is a solution of
 \begin{equation}\label{symmetric-stable-1-m-pde}
\frac{\partial^{2m}}{\partial t^{2m}}\,q(t,x)
=-\frac12\Delta \,q(t,x).
\end{equation}
The equations \eqref{symmetric-stable-pde} and
\eqref{symmetric-stable-1-m-pde} should be compared.
This result is a special case of the following result
stated in  Nane \cite[Lemma 3.2]{nane3} that is due to
DeBlassie \cite{deblassie2} originally:
Let  $H=\frac12$ and $0<\alpha=\frac km <2$, where $k, m$ are
relatively prime. Let $Y$ be a stable subordinator of
index $\alpha/2$. In this case $W(Y(t))$ is a symmetric
stable process of index $\alpha=\frac km$. Then
the density $q(t,x)$ of $W(Y(t))$ is a solution of
\begin{equation}\label{symmetric-stable-k-m-pde}
\frac{\partial^{2m}}{\partial t^{2m}}\,q(t,x)
=\frac{1}{2^k}(-\Delta)^k\,q(t,x), \ \ s, t>0.
\end{equation}
We can work out a similar connection for the case $0<H<1$ and
$\alpha=\frac km$ ($k$ and $m$ are relatively prime integers)
by using integration by parts, \eqref{pde-cauchy-fractional-bm},
and \eqref{symmetric-stable-k-m-pde}, which extends the
PDE connection in Nane \cite[Theorem]{nane3} for the
case $H=\frac12, \alpha=\frac km$.

Alternatively, for the case $0<H<1$, $\alpha=\frac km$, $k,m$
relatively prime integers, we have $W(Y(t))=W(B(U(t)))$, where
$B$ is a Brownian motion running twice the speed of standard Brownian
motion and $U$ is a stable subordinator of index $\alpha/2=\frac{k}{2m}$.
In this case using the methods of Theorem \ref{thm-pde-h-alpha-1},
equation \eqref{pde-h-alpha-2} for the density of $W(B(t))$ and equation
\eqref{pde-stable-sub} for the density of $U(t)$ we can obtain the PDE
solved by the density of $W(Y(t))$.

For many other PDE connections of  different types of
subordinate processes, we refer to \cite{allouba1, bmn-07,
bmn-09, dovidio-orsingher, nane3, OB1}.

\subsection{Scaling limits of randomly indexed random walks}

Now we prove that the $\alpha$-time fBm $ Z = \big\{W\big(Y(t)\big), t \ge 0\big\}$,
where $W$ is $H$-fractional Brownian motion with values in $\R$ and $\{Y(t),\,t\ge0\}$
is a stable subordinator of index $\alpha\in(0,\,1)$, can be
approximated weakly in the Skorohod space
$D\left([0,\,\infty),\,\R\right)$ by normalized
partial sums of randomly indexed random walks with dependent jumps.

Let $\{\xi_n,\,n\in\Z\}$ be a sequence of i.i.d. random
variables with $\E[\xi]=0$, $\E[\xi^2]=1$, and let $\{a_n,\,n\in\Z_+\}$
be a sequence of real numbers such that
\[\sum_{n=0}^\infty a_n^2<\infty.\]
We consider the linear stationary process $\{X_n,\,n\in\N\}$ defined by
\begin{equation}\label{Eq:WC1}
X_n=\sum_{j=0}^\infty a_j\xi_{n-j},\quad n\in\N.
\end{equation}
Davydov \cite{davydov} was the first to study the weak convergence
of normalized partial sums of $\{X_n,\,n\in\N\}$ to fractional
Brownian motion. The following result is taken from Whitt
\cite[Theorem 4.6.1]{whitt}.
\begin{lemma}\label{Thm:DavWhitt}
Let $\{X_n,\,n\in\N\}$ be the linear stationary process defined
by (\ref{Eq:WC1}), and let $S_n=X_1+\cdots+X_n$. If
\begin{equation}\label{Eq:WC2}
{\rm Var}(S_n)= n^{2H}L(n),\quad n\in \N
\end{equation}
for some $H\in(0,\,1)$, where $L(\cdot)$ is a slowly varying function,
and
\begin{equation}\label{Eq:WC3}
\E\left[|S_n|^{2a}\right]\le K\cdot \left(\E[S_n^2]\right)^a
\end{equation}
for some constants $a>1/H$ and $K>0$, then
\begin{equation}\label{Eq:WC4}
\left\{\frac1{n^H\sqrt{L(n)}}S_{\lfloor nt \rfloor},\,t\ge 0\right\}
\Rightarrow \big\{W(t),\,t\ge0\big\}
\end{equation}
in the $J_1$-topology on $D\left([0,\,\infty),\,\R\right),$
where $W$ is a fractional Brownian motion with Hurst index $H.$
\end{lemma}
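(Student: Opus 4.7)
The plan is to follow the standard two-step route for weak convergence in $D([0,\infty),\R)$: first establish convergence of finite-dimensional distributions, then prove tightness in the $J_1$-topology. Since the statement is purely about a linear stationary process $X_n=\sum_j a_j\xi_{n-j}$ with i.i.d. innovations, both ingredients should ultimately be reduced to estimates on second moments of the partial sums, using hypotheses (\ref{Eq:WC2}) and (\ref{Eq:WC3}).

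For the finite-dimensional distributions, I would first fix $0\le t_1<\cdots<t_k$ and write each rescaled partial sum $n^{-H}L(n)^{-1/2}S_{\lfloor nt_i\rfloor}$ as a linear combination of the $\xi_j$'s. The key computation is the limit of the normalized covariance $n^{-2H}L(n)^{-1}\E[S_{\lfloor nt\rfloor}S_{\lfloor ns\rfloor}]$. Using the polarization identity $2\E[S_mS_n]=\E[S_m^2]+\E[S_n^2]-\E[(S_m-S_n)^2]$, together with the stationarity of $\{X_n\}$ (which gives $S_m-S_n\stackrel{d}{=}S_{m-n}$ in law in terms of second moments), I can apply the regularly varying behavior (\ref{Eq:WC2}) to show these covariances converge to $\frac{1}{2}(t^{2H}+s^{2H}-|t-s|^{2H})$, i.e. the fBm covariance. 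Joint Gaussianity in the limit then follows from a Lindeberg-type CLT for triangular arrays, the Lindeberg condition being easily verified from the finite second moment of $\xi$ together with $\sum a_j^2<\infty$ (no single coefficient dominates after rescaling).

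For tightness, the moment bound (\ref{Eq:WC3}) is designed precisely to fit a Kolmogorov/Billingsley-type criterion. Using stationarity, for integers $m<n$ we have $\E[|S_n-S_m|^{2a}]=\E[|S_{n-m}|^{2a}]\le K(\E[S_{n-m}^2])^a=K((n-m)^{2H}L(n-m))^a$. Rescaling by $n^{-H}L(n)^{-1/2}$ and using the uniform convergence theorem for slowly varying functions on compact $t$-intervals, this becomes a bound of the form $K'|t_2-t_1|^{2aH}$ on the increments of the rescaled process (up to a slowly varying factor that can be absorbed). Since $2aH>2>1$ by the hypothesis $a>1/H$, the standard Chentsov-type criterion in the Skorohod space yields $J_1$-tightness.

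The main obstacle is the finite-dimensional convergence, specifically two points that need care. First, proving asymptotic normality of linear combinations of the $\xi_j$ with coefficients that come from truncating the infinite sum $\sum a_j\xi_{n-j}$: one must handle the boundary terms from the overlapping tails of the moving averages, which is where the square-summability $\sum a_j^2<\infty$ enters decisively. Second, identifying the limit covariance as the fBm covariance relies on (\ref{Eq:WC2}) through a delicate use of Karamata-type results for slowly varying $L$, in order to pass from the single-scale asymptotic of ${\rm Var}(S_n)$ to the joint asymptotic of ${\rm Var}(S_{\lfloor nt\rfloor})$ uniformly in $t$ on compacts. Once both are in hand, tightness is routine and the convergence (\ref{Eq:WC4}) follows.
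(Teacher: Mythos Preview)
The paper does not prove this lemma at all: it is quoted verbatim as a known result, with the attribution ``The following result is taken from Whitt [Theorem~4.6.1]'' (this is Davydov's classical invariance principle for linear processes). So there is no ``paper's own proof'' to compare against; your proposal is supplying a proof where the paper simply cites one.

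That said, your two-step outline (finite-dimensional convergence via the covariance computation and a Lindeberg-type CLT, then tightness from the moment bound~(\ref{Eq:WC3}) through a Kolmogorov--Chentsov criterion, using $2aH>2>1$) is exactly the standard route to this result and is correct in structure. The only point where your sketch is a bit breezy is the asymptotic normality step: for a general linear process $X_n=\sum_{j\ge0}a_j\xi_{n-j}$ with merely square-summable weights, the verification that linear combinations of the rescaled partial sums are asymptotically Gaussian is not a one-line Lindeberg check; one typically truncates the moving-average kernel, controls the remainder in $L^2$ using $\sum a_j^2<\infty$, and then applies a CLT for $m$-dependent arrays (or argues via the representation in terms of the i.i.d.\ $\xi_j$ directly). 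You flag this as ``the main obstacle,'' which is accurate, but the resolution you gesture at would need to be spelled out more carefully to constitute a proof rather than a plan. The tightness half, by contrast, is essentially complete as you wrote it.
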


\begin{example}\label{Example:WC1}{\rm
As in \cite[pp.123--124]{whitt} we take $a_j=c j^{-\gamma}$ for
some constants $c\in\R\backslash\{0\}$ and
$\gamma \in(\frac12,\,1),$ then it can be verified that
\begin{equation}\label{Eq:WC5}
{\rm Var}(S_n)\sim c_1 n^{3-2\gamma}\quad\mbox{as } n\to\infty,
\end{equation}
where
\[
c_1=\frac{2c^2\,\Gamma(1-\gamma)\Gamma(2\gamma-1)}{\Gamma(\gamma)
(3-2\gamma)^2}.
\]
By applying Lemma \ref{Thm:DavWhitt}, we have that
\begin{equation}\label{Eq:WC6}
\left\{\frac1{\sqrt{c_1}\,n^H}S_{\lfloor nt \rfloor},\,t\ge 0\right\}
\Rightarrow \{W(t),\,t\ge0\},
\end{equation}
in the $J_1$-topology on $D\left([0,\,\infty),\,\R\right),$
where $H=\frac{3-2\gamma}{2}.$}
\end{example}

\begin{theorem}\label{Thm:WC}
Let $\{X_n, n \ge 1\}$ be the linear stationary process
defined by (\ref{Eq:WC1}) and satisfies  (\ref{Eq:WC2})
and (\ref{Eq:WC3}) in Lemma \ref{Thm:DavWhitt}.
Let $\{J_n, n \ge 1\}$ be a sequence of i.i.d. random variables
also independent of $\{\xi_n,\, n\in\Z_+\},$
which belongs to the domain of attraction of some
stable law $Y$ with index $\alpha\in(0,\,1)$ and $Y>0$ a.s.
Denote by $\{b_n, n \ge 1\}$ a sequence of positive
numbers such that $b_nT_n\Rightarrow Y$, where
\[T_n=J_1+\cdots+J_n,\quad \forall n\ge 1.\]
Then as $c \to \infty$
\begin{equation}\label{Eq:WC7}
\left\{\frac1{\big(b(c)\big)^{-H}\sqrt{L\big(b(c)^{-1}\big)}}
S_{\lfloor T(ct)\rfloor},\,t\ge 0\right\}
\Rightarrow \left\{W\big(Y(t)\big),\,t\ge0\right\}
\end{equation}
in the $J_1$-topology on $D\left([0,\,\infty),\,\R\right),$
where $b(c)=b_{\lfloor c\rfloor}$ and $T(s)=T_{\lfloor s \rfloor}.$
\end{theorem}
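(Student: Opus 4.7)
The plan is to realize $W(Y(t))$ as the image of the pair $(W,Y)$ under the composition map $(f,g)\mapsto f\circ g$, and to deduce the weak convergence from two separate functional invariance principles combined with a continuous mapping theorem on $D([0,\infty),\R)$. Set $n_c:=b(c)^{-1}$, so that $n_c\to\infty$ as $c\to\infty$.

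First I would apply Lemma~\ref{Thm:DavWhitt} to $\{X_n\}$. Because the lemma is stated for integer indices while $n_c$ is in general non-integral, a standard slow-variation argument ($L(n_c)/L(\lfloor n_c\rfloor)\to 1$ and $(n_c/\lfloor n_c\rfloor)^H\to 1$) lets me rewrite the normalization and obtain
$$
\left\{\frac{S_{\lfloor n_c s\rfloor}}{n_c^H\sqrt{L(n_c)}}\right\}_{s\ge 0}\Rightarrow \{W(s)\}_{s\ge 0}
$$
in the $J_1$-topology on $D([0,\infty),\R)$. Since $\{J_n\}$ lies in the domain of attraction of the stable law of index $\alpha\in(0,1)$ and $J_n>0$, the classical functional stable limit theorem for sums of non-negative i.i.d.\ variables yields
$$
\{b(c)\,T(ct)\}_{t\ge 0}\Rightarrow \{Y(t)\}_{t\ge 0}
$$
also in the $J_1$-topology, where the limit is the stable subordinator appearing in the theorem. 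By the assumed independence of $\{\xi_n\}$ and $\{J_n\}$, these two convergences hold jointly in the product space $D([0,\infty),\R)\times D([0,\infty),\R)$, with independent marginals $(W,Y)$.

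The identity $n_c b(c)=1$ gives
$$
\frac{S_{\lfloor n_c\cdot b(c)\,T(ct)\rfloor}}{n_c^H\sqrt{L(n_c)}}=\frac{S_{\lfloor T(ct)\rfloor}}{b(c)^{-H}\sqrt{L(b(c)^{-1})}},
$$
which is precisely the left-hand side of \eqref{Eq:WC7}. It then suffices to invoke a continuous mapping theorem for composition on the Skorohod space (for instance, Theorem~13.2.4 in Whitt \cite{whitt}): the pair $(W,Y)$ is a continuity point of $(f,g)\mapsto f\circ g$ because $W$ has continuous sample paths almost surely and $Y$ is non-decreasing as a subordinator. Composing the two invariance principles then produces the desired weak convergence to $\{W(Y(t))\}_{t\ge 0}$.

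The main obstacle is the $J_1$-continuity of the composition map at $(W,Y)$: the prelimit partial-sum processes are step functions and the time-change processes $\{b(c)T(c\cdot)\}$ have macroscopic jumps, so a priori the jumps of the inner and outer sequences could align pathologically. Continuity of the outer limit $W$ is precisely what rules this out, forcing $f_n\circ g_n\to W\circ Y$ in $J_1$ whenever $(f_n,g_n)\to(W,Y)$ jointly; Whitt's composition theorem absorbs all of the remaining $J_1$-topology subtleties.
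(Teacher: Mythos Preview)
Your proposal is correct and follows essentially the same strategy as the paper: establish the two functional limit theorems for the partial-sum process and for the time-change process separately, combine them into joint convergence via independence, and then apply Whitt's continuous mapping theorem for composition using that $W$ has continuous paths and $Y$ is a subordinator. The paper invokes Theorem~13.2.2 of Whitt (using that $Y$ is strictly increasing) rather than Theorem~13.2.4, and it is terser about the change of normalization from $n$ to $b(c)^{-1}$, but the architecture of the argument is identical.
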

\begin{proof}\
For  $t>0,$ let $b(t)=b_{\lfloor t\rfloor}$. Then $b(t)=t^{-1/\alpha}\ell(t),$
for some slowly varying function $\ell(t)$
at infinity. It follows from Theorem 4.5.3 in Whitt \cite{whitt} that
\begin{equation}\label{Eq:WC8}
\left\{b(c)T(ct),\,t\ge 0\right\}\Rightarrow \{Y(t),\,t\ge0\}
\end{equation}
in the $J_1$-topology on $D\left([0,\,\infty),\,\R_+\right),$
where $\{Y(t),\, t\ge 0\}$ is a stable subordinator with
index $\alpha.$

Notice that $\{X_n,\,n\ge 1\}$ and $\{J_n\}$ are independent,
we derive from Lemma \ref{Thm:DavWhitt} and \eqref{Eq:WC8} that
as $c \to \infty$
\begin{equation}\label{Eq:WC9}
\left\{\Big(\frac1{c^H\sqrt{L(c)}}S_{\lfloor ct\rfloor},\,b(c)T(ct)\Big),
\,t\ge 0\right\}
\Rightarrow \Big\{\big(W(t),\,Y(t)\big),\,t\ge0\Big\}
\end{equation}
in the $J_1$-topology on $D\left([0,\,\infty),\,\R\right)\times
D\left([0,\,\infty),\,\R_+\right).$

Since, for the limiting processes in (\ref{Eq:WC9}), the sample
function $W(t)$ is continuous and $Y(t)$
is strictly increasing, the conclusion of Theorem \ref{Thm:WC}
follows from Theorem 13.2.2 in Whitt \cite{whitt}.
\end{proof}

\section{Existence of local times }
\label{sec:Exist}

Let $X= \{X(t), t\geq 0\}$ be an $\alpha$-time fractional Brownian
motion in $\RR{R}^{d}$ defined by (\ref{multi-alpha}). In this
section, we study the existence of local times
$$
L=\{ L(x,B): \ x\in \RR{R}^{d}, B\in \mathcal{B}(\RR{R}_{+})\}
$$
of $X$, where $\mathcal{B}(\RR{R}_{+})$ is the Borel
$\sigma$-algebra of $\RR{R}_{+}$. In Section \ref{sec:continuity},
we will establish joint continuity and sharp H\"older conditions in
the set variable for the local times.

We recall briefly the definition of local times. For an extensive
survey, see Geman and Horowitz \cite{GH80}. Let $X: \R\to \R^d$ be
any Borel function and let $B\subset \RR{R}$ be a Borel set. The
occupation measure of $X(t)$ on $B$ is defined by
\begin{equation}\label{occup-measure}
\mu_{B}(A)=\lambda_{1}\{t\in B: \ \ X(t)\in A\}
\end{equation}
for all Borel sets $A\subset \RR{R}^{d}$, where $\lambda_{1}$ is the
Lebesgue measure on $\R$. If $\mu_{B}$ is absolutely continuous with
respect to the Lebesgue measure $\lambda_{d}$ on $\RR{R}^{d}$, we
say that $X$ has a local time on $B$ and define its local time
$L(x,B)$ to be the Radon-Nikodym derivative of $\mu_{B}$. If
$B=[0,t]$, we will simply write $L(x,B)$ as $L(x,t)$. If $I = [0,
T]$ and $L(x, t)$ is continuous as a function of $(x, t) \in
\R^d \times I$, then we say that $X$ has a jointly continuous
local time on $I$. In this latter case, the set function
$L(x, \cdot)$ can be extended to be a finite Borel measure
on the level set
\[
X^{-1}_I(x) =\{t \in I: X(t) = x\}.
\]
See Adler \cite[Theorem 8.6.1]{adler}. This fact has been used by many
authors to study fractal properties of level sets, inverse image and
multiple times of stochastic processes. Related to our paper, we
mention that Xiao \cite{xiao} and Hu \cite{Hu99} have studied the
Hausdorff dimension, and exact Hausdorff and packing measure of the
level sets of iterated Brownian motion, respectively.

An $\alpha$-stable L\'evy process $Y = \{Y(t), t \ge 0\}$ with
values in $\R$ is a stochastically continuous process with
stationary independent increments, $Y(0) = 0$, and characteristic
exponent $\psi$ given by
\begin{equation}\label{Eq:psi}
\begin{split}
\psi(z)& =- \sigma  |z|^{\alpha}\left( 1-i \beta\, {\rm sgn} (z)\tan
\frac{\pi\alpha}{2}\right), \ \ \ \ \alpha \neq 1;\\
\psi(z) &= -\sigma |z|\left( 1+i\frac{2}{\pi}\beta\, {\rm sgn}(z)
\ln (|z|)\right), \ \ \ \ \alpha=1,
\end{split}
\end{equation}
where $0<\alpha \leq 2$, $\sigma>0$ and $-1\leq \beta \leq 1$ are
fixed constants (we have tacitly assumed that there is no drift
term). See Bertoin \cite{bertoin} and Sato \cite{sato} for a systematic
accounts on L\'evy processes and stable laws, respectively.

Throughout this paper, we assume that $Y= \{Y(t), t \ge 0\}$
is strictly stable. That is, we assume $\beta = 0$ in
(\ref{Eq:psi}) when $\alpha = 1$ so the asymmetric Cauchy process
is excluded. Strictly stable L\'evy processes of index $\alpha$ are
$(1/\alpha)$-self-similar. Recall that, for $t > 0$, $p_t(x)$ is the
density function of the random variable $Y(t)$. It is a bounded
continuous function with the following scaling property:
\begin{equation}\label{Eq:scaling}
p_t(x) = p_{rt}(r^{1/\alpha} x)\, r^{1/\alpha} \ \ \ \hbox{  for
every $r
> 0$}.
\end{equation}

As discovered in Taylor \cite{Taylor67}, it is natural to
distinguish between two types of strictly stable processes:
those of \emph{Type A}, and those of \emph{Type B}.
A strictly stable process, $Y$, is of \emph{Type A}, if
\[
    p_t(x) > 0, \qquad \forall t > 0, \, x \in \R;
\]
all other stable processes are of  \emph{Type B}. Taylor
\cite{Taylor67} has shown that if $\alpha \in (0, 1)$ and  $Y$ is of
\emph{Type B}, then either $Y$ or $-Y$ is a subordinator, while all
other strictly stable processes of index $\alpha \ne 1$  are of
\emph{Type A}. Hence, without loss of generality, we will assume $Y$
is either a strictly stable process of type $A$, or a subordinator. It
will be shown that the properties of local times of $\alpha$-time
fractional Brownian motion $X$ in $\R^{d}$ depends on the type of $Y$.

The following existence theorem for the local time of $X$ is easily
proved  by using the Fourier analysis (see, e.g., Berman
\cite{Berman69}, Geman and Horowitz \cite{GH80} or Kahane
\cite{kahane}).

\begin{theorem}\label{fourier}
Let $X=\{ X(t),\, t \ge 0\}$ be an $\alpha$-time fractional Brownian
motion in $\R^d$. Then for any $T>0$, $X$ has a local time $L(x,T)
\in L^2(\P \times \lambda_d)$ almost surely if and only if $d<\alpha
/H$.
\end{theorem}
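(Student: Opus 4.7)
The plan is to apply the standard Fourier-analytic criterion of Berman \cite{Berman69}: the process $X$ admits a local time $L(\cdot,T) \in L^2(\P \times \lambda_d)$ if and only if
\begin{equation*}
I_T := \int_{\R^d} \int_0^T \int_0^T \E\exp\bigl(i\l \xi, X(s)-X(t)\r\bigr) \, ds\, dt\, d\xi < \infty,
\end{equation*}
so the whole task reduces to evaluating this integral explicitly and reading off the dichotomy $d<\alpha/H$ versus $d\ge\alpha/H$.

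First I would exploit the product/independence structure. Because $X_j = W_j\circ Y_j$ and the pairs $(W_j,Y_j)$ are independent across $j$, the characteristic function factorizes:
\begin{equation*}
\E\exp(i\l \xi, X(s)-X(t)\r) = \prod_{j=1}^d \E\exp\bigl(i\xi_j\bigl(W_j(Y_j(s))-W_j(Y_j(t))\bigr)\bigr).
\end{equation*}
Conditioning each factor on $Y_j$ and using that the fBm increment $W_j(Y_j(s))-W_j(Y_j(t))$ is, given $Y_j$, a centered Gaussian with variance $|Y_j(s)-Y_j(t)|^{2H}$, each factor collapses to $\E\exp\bigl(-\tfrac12\xi_j^2|Y_j(s)-Y_j(t)|^{2H}\bigr)$.

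Next I would apply Fubini to pull the $\xi$-integral inside the expectation and carry out a Gaussian integral coordinate by coordinate:
\begin{equation*}
\int_{\R^d}\E\exp(i\l \xi, X(s)-X(t)\r)\,d\xi \;=\; (2\pi)^{d/2}\prod_{j=1}^d \E\,|Y_j(s)-Y_j(t)|^{-H}.
\end{equation*}
Using the stationary increments of the Lévy process and its $1/\alpha$-self-similarity, $Y_j(s)-Y_j(t)\stackrel{d}{=}Y(|s-t|)\stackrel{d}{=}|s-t|^{1/\alpha}Y(1)$, so the right-hand side becomes $(2\pi)^{d/2}c_{H,\alpha}^d\,|s-t|^{-dH/\alpha}$ with $c_{H,\alpha}=\E|Y(1)|^{-H}$. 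At this point the criterion reduces to
\begin{equation*}
I_T \;=\; (2\pi)^{d/2}c_{H,\alpha}^{\,d}\int_0^T\!\!\int_0^T |s-t|^{-dH/\alpha}\,ds\,dt,
\end{equation*}
which is finite iff $dH/\alpha<1$, giving both directions simultaneously.

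The main obstacle is verifying the moment bound $c_{H,\alpha}<\infty$ for the full range of the parameters. Here I would split into cases according to the typology recalled in the excerpt. In the Type $A$ case the density $p_1$ is bounded and has the standard stable tail $p_1(x)\asymp |x|^{-1-\alpha}$; since $H<1$, $|x|^{-H}$ is locally integrable at $0$ and the bound $|x|^{-H}\le 1$ plus the tail of $p_1$ handles infinity. In the subordinator case $p_1$ vanishes on $(-\infty,0)$ and decays rapidly at $0$ by the well-known asymptotics for one-sided stable densities, while the tail behavior is the same; in either subcase the integral $\int |x|^{-H}p_1(x)\,dx$ is finite. Beyond this moment check the argument is completely mechanical, and the resulting criterion $d<\alpha/H$ is exactly the assertion of the theorem.
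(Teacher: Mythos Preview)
Your proposal is correct and follows essentially the same Fourier-analytic route as the paper: both reduce the Berman criterion to the integral $\int_0^T\!\int_0^T|s-t|^{-dH/\alpha}\,ds\,dt$ via the factorization over coordinates, conditioning on $Y$, a Gaussian integral, and the scaling of the stable law. If anything, you are slightly more explicit than the paper in justifying $\E|Y(1)|^{-H}<\infty$ for both Type~A and subordinator cases, whereas the paper leaves $\int_{\R}|z|^{-H}p_1(z)\,dz<\infty$ implicit.
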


\begin{proof}
Let $\mu_{[0,T]}$ be the occupation measure of $X$ on $[0, T]$ defined by
(\ref{occup-measure}). Then its  Fourier transform can be written
as
$$
\widehat{\mu}_{[0,T]}(u)=\int_{0}^{T}\exp (i\l u, X(t) \r) \, dt,
$$
where $\l \cdot ,\, \cdot \r$ is the ordinary scalar product in
$\RR{R}^{d}$. It follows from Fubini's theorem that
\begin{equation}\label{plancharel}
\E\int_{\RR{R}^{d}}|\widehat{\mu}_{[0,T]}(u)|^{2}du= \int_{0}^{T}
\int_{0}^{T}\int_{\RR{R}^{d}}\E \exp(i \l u, X(t)-X(s)\r)\, dudsdt.
\end{equation}
To evaluate the characteristic function in (\ref{plancharel}), we
assume $0<s<t$ (the other case is similar) and note that the density
of $(Y(t),Y(s))$ is given by
$$
p_{s,t}(x,y)=p_{s} (x)p_{t-s} (y-x).
$$

Since $X_{1},\cdots, X_{d}$ are independent copies of
$Z=\{W(Y(t)), t \ge 0\}$, we have
\begin{equation} \label{laplace-trans}
\begin{split}
\E\exp(i \l u,X(t)-X(s)\r) &= \prod_{k=1}^{d} \E\exp
\big(iu_{k}(W(Y(t))-W(Y(s))) \big) \\
&= \prod_{k=1}^{d} \int_{\RR{R}} \int_{\RR{R}} \ \E \exp
\big(iu_{k}(W(y)-W(x)) \big) p_{s,t}(x,y)\, dxdy \\
&= \prod_{k=1}^{d} \int_{\RR{R}} \int_{\RR{R}} \exp
\Big(-\frac{u_{k}^{2}} {2}|y-x|^{2H} \Big)\, p_{s,t}(x,y)\, dxdy \\
&=  \prod_{k=1}^{d}\int_{\RR{R}}
\exp\Big(-\frac{u_{k}^{2}}{2}|z|^{2H} \Big) p_{t-s} (z)\, dz.
\end{split}
\end{equation}
To evaluate the integrals with respect to $u$, we make a change of
variables to get
\begin{equation}\label{Eq:F2}
 \int_{\RR{R}}\exp\Big(-\frac{u_{k}^{2}}{2}\, |z|^{2H} \Big)\, du_{k}
= |z|^{-H} \int_{\RR{R}}\exp\Big(-\frac{u_{k}^{2}}{2}\Big)\, du_{k}.
\end{equation}
It follows from (\ref{plancharel}), (\ref{laplace-trans}) and
(\ref{Eq:F2}) that
\begin{equation}\label{Eq:F3}
\begin{split} \E\int_{\RR{R}^{d}}|\widehat{\mu}_{[0,T]}(u)|^{2}\, du
&= \int_{0}^{T}\int_{0}^{T}\prod_{k=1}^{d}\left(\int_{\RR{R}}\exp
\Big(-\frac{u_{k}^{2}}{2}\Big)\, du_{k}
\int_{\RR{R}}|z|^{-H} p_{|t-s|} (z)\, dz\right) ds dt \\
&=
(2 \pi)^{d/2}\left( \int_{\RR{R}}|z|^{-H} p_{1}(z)dz\right)^{d}
\int_{0}^{T}\int_{0}^{T}\frac{1}{|t-s|^{dH/\alpha}}\, dsdt.
\end{split}
\end{equation}
In the above, we have used the fact that $p_{|t-s|} (z) =
|t-s|^{-1/\alpha}p_{1}  (|t-s|^{-1/\alpha}z)$ and another change of
variables.

The last integral in (\ref{Eq:F3}) is finite if and only if
$dH/\alpha< 1$. Hence $\widehat{\mu}(\cdot)\in L^{2}(\P \times
\lambda_d)$ if and only if $dH/\alpha< 1$. Therefore, Theorem
\ref{fourier} follows from Plancherel's theorem (see also Theorem
21.9 in Geman and Horowitz \cite{GH80}).
\end{proof}

We can express the local time $L(t,x)$ as the inverse Fourier
transform of $\widehat{\mu}_{[0, T]}(u)$, namely
\begin{equation}\label{local-time-def}
\begin{split}
L(t,x)&= \left(\frac{1} {2\pi} \right)^{d} \int_{\RR{R}^{d}}
\exp (-i \l u, x\r)\, \widehat{\mu}_{[0, t]}(u)\, du \\
&=\left(\frac{1} {2\pi}\right)^{d} \int_{0}^{t} \int_{\RR{R}^{d}}
\exp \big(-i \l u, x \r \big)  \, \exp \big( i \l u, X(s) \r
\big)\,du\, ds.
\end{split}
\end{equation}
It follows from (\ref{local-time-def}) that for any $x,w\in
\RR{R}^{d}$, $B\in \mathcal{B}(\RR{R}_{+})$ and all integers $n\geq
1$, we have
\begin{equation} \label{local-time-moment}
\begin{split}
\E[L(x,B)]^{n}&=
(2\pi)^{-nd}\int_{B^{n}}\int_{\RR{R}^{nd}}\exp \bigg(-i \sum_{j=1}^{n}
 \l u_{j},x \r \bigg)\\
& \qquad \quad  \times \E\exp\bigg( i\sum_{j=1}^{n} \l u_{j},X(t_{j})
\r\bigg)\, d\bar{u} \, d\bar{t}
\end{split}
\end{equation}
and for all even integers $n \ge 2$,
\begin{equation} \label{local-time-diff-moment}
\begin{split}
& \E[L(x+w,B)-L(x,B)]^{n}\\ &=(2\pi)^{-nd} \int_{B^{n}}
\int_{\RR{R}^{nd}} \prod_{j=1}^{n} \Big(\exp(-i \l u_{j},
x+w \r)-\exp(-i \l u_{j},x \r) \Big) \\
& \qquad \qquad \qquad \quad \times \E\exp\bigg(i\sum_{j=1}^{n}
\l u_{j}, X(t_{j}) \r \bigg)\, d\bar{u}\, d\bar{t},
\end{split}
\end{equation}
where $\bar{u}=(u_{1}, \cdots, u_{n})$, $\bar{t}=(t_{1}, \cdots,
t_{n})$ and each $u_{j}\in \RR{R}^{d}$, $t_{j}\in B$ $(j=1,\cdots,
n)$. In the coordinate notation we then write
$u_{j}=(u_{j}^{1},\cdots ,u_{j}^{d})$. For details in deriving the
equations (\ref{local-time-moment}) and
(\ref{local-time-diff-moment}), see Geman and Horowitz \cite{GH80}.

\section{Joint continuity and H\"older conditions}
\label{sec:continuity}

In this section, we establish the joint continuity and sharp
H\"{o}lder conditions in the set variable for the local times of
$d$-dimensional $\alpha$-time fractional Brownian motion $X$. Then
we apply these results to study the irregularities of the sample
paths of $X(t)$.

We use methods which are similar to those in Ehm \cite{ehm} and
Xiao \cite{xiao2, xiao}. The following Lemmas \ref{moments},
\ref{moments-difference} and
\ref{moments-2} give the crucial estimates for the moments of the
local time of $\alpha$-time fractional Brownian motion. Note that
the estimates in the case $Y$ is of type A  [i.e., (\ref{moment-1})
and (\ref{moment-2})] are different from the case when $Y$ is a
stable subordinator [see Lemma \ref{moments-2}].

We need the fact that fractional Brownian motion
$W$ satisfies the property of strong local nondeterminism (SLND), which was proved by Pitt \cite{Pitt78}. More
precisely, for any $y_{1},\cdots, y_{n}\in \RR{R}$,
\begin{equation}\label{nondetermin}
\begin{split}
{\rm Var} \big(W(y_{n})|W(y_{1}),\cdots ,W(y_{n-1}) \big)
\geq K\, \min_{0 \le j \le n-1} |y_{n}-y_{j}|^{2H},
\end{split}
\end{equation}
where $y_{0}=0$ and $K>0$ is an absolute constant.

\begin{lemma}\label{moments}
Let $X = \{X(t),\, t\geq 0\}$ be a $d$-dimensional $\alpha$-time
fractional Brownian motion with $d<\alpha /H$ for which $Y(t)$ is of
type A. For any $h>0$, $B=[0,h]$, $x\in \RR{R}^{d}$, any
integer $n\geq 1$, we have
\begin{equation} \label{moment-1}
\E\big[L(x,B) \big]^{n} \leq K^{n}\, h^{(1-dH/\alpha)n}\,
(n!)^{dH(1+1/\alpha)},
\end{equation}
where $K>0$ is a finite constant depending on $d$, $H$ and $\alpha$ only.
\end{lemma}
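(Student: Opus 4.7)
The plan is to bound $\E[L(x,B)]^n$ starting from the moment formula \eqref{local-time-moment}. First I would take absolute values in the integrand and use the symmetry under simultaneous permutations of $(t_j,u_j)$ to reduce the $\bar t$-integration to the ordered simplex $\{0<t_1<\cdots<t_n<h\}$, picking up a factor $n!$. Independence of $X_1,\ldots,X_d$ factors the characteristic function as $\prod_{k=1}^{d}\E\exp\bigl(i\sum_j u_j^k W_k(Y_k(t_j))\bigr)$, and for each coordinate $k$ I would condition on $\bar Y_k=(Y_k(t_1),\ldots,Y_k(t_n))$: conditionally, $(W_k(Y_k(t_1)),\ldots,W_k(Y_k(t_n)))$ is a centered Gaussian vector with some covariance $\Sigma_k$, so $|\E\exp(i\sum u_j^k W_k(Y_k(t_j)))|\le\E[\exp(-\tfrac12(u^k)^{\top}\Sigma_k u^k)]$. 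Fubini and the Gaussian identity give $\int_{\R^n}|\cdot|\,du^k\le(2\pi)^{n/2}\,\E[(\det\Sigma_k)^{-1/2}]$, and Pitt's SLND \eqref{nondetermin}, applied iteratively in the natural time order, yields
\[
\det\Sigma_k\ge K^n\prod_{j=1}^n\min_{0\le i<j}|Y_k(t_j)-Y_k(t_i)|^{2H},\qquad Y_k(t_0):=0.
\]

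The central estimate (per coordinate) is
\[
\E\Bigl[\prod_{j=1}^n\min_{0\le i<j}|Y_k(t_j)-Y_k(t_i)|^{-H}\Bigr]\le C^n(n!)^H\prod_{j=1}^n(t_j-t_{j-1})^{-H/\alpha}.
\]
I would prove this by iterated conditioning using the independent-increments property of $Y_k$: at step $j$, rescaling the increment $Y_k(t_j)-Y_k(t_{j-1})$ by $\tau_j^{1/\alpha}$ (with $\tau_j:=t_j-t_{j-1}$) converts the $j$-th conditional expectation into $\tau_j^{-H/\alpha}\int_\R\min_{0\le i<j}|w-z_i^{(j)}|^{-H}p_1(w)\,dw$, where $p_1$ denotes the bounded continuous density of $Y_k(1)$ (the type~A hypothesis is essential here) and $z_i^{(j)}$ are the rescaled shifts determined by the past. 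The decisive uniform estimate is
\[
\int_\R\min_{1\le i\le j}|w-z_i|^{-H}\,p_1(w)\,dw\le Cj^H\qquad\text{for all }j\ge 1\text{ and }z_1,\ldots,z_j\in\R,
\]
which I would obtain by a layer-cake argument: writing the left side as $H\int_0^\infty\P_V\bigl(\min_i|V-z_i|<\varepsilon\bigr)\,\varepsilon^{-H-1}\,d\varepsilon$ with $V\sim p_1$, dominating the probability by $\min(1,2j\|p_1\|_\infty\varepsilon)$ via the union bound, and splitting the $\varepsilon$-integral at $\varepsilon_0=1/(2j\|p_1\|_\infty)$ leaves two pieces that each evaluate to an explicit constant multiple of $j^H$. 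Iterating the per-step bound $Cj^H\tau_j^{-H/\alpha}$ over $j=1,\ldots,n$ gives the displayed inequality.

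Taking the product over $k=1,\ldots,d$ and applying the Dirichlet identity
\[
\int_{0<t_1<\cdots<t_n<h}\prod_{j=1}^n(t_j-t_{j-1})^{-dH/\alpha}\,d\bar t=\frac{\Gamma(1-dH/\alpha)^n}{\Gamma(n(1-dH/\alpha)+1)}\,h^{n(1-dH/\alpha)}
\]
together with the factor $n!$ from the symmetry reduction produces $\E[L(x,B)]^n\le K^n(n!)^{dH+1}h^{n(1-dH/\alpha)}/\Gamma(n(1-dH/\alpha)+1)$. Stirling's formula converts $(n!)^{dH+1}/\Gamma(n(1-dH/\alpha)+1)$ into $K^n(n!)^{dH+dH/\alpha}=K^n(n!)^{dH(1+1/\alpha)}$, yielding the claim. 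The main obstacle is the uniform $Cj^H$ bound: the naive estimate $\min_i|w-z_i|^{-H}\le\sum_i|w-z_i|^{-H}$ only produces $Cj$, which would inflate the exponent on $n!$ by a further $d(1-H)$.
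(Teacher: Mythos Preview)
Your proposal is correct and follows essentially the same route as the paper: the moment formula \eqref{local-time-moment}, reduction to the ordered simplex, conditioning on each $Y_k$, Pitt's SLND to bound $\det\Sigma_k$ from below, the key per-step estimate $\int_\R \min_{i\le j}|w-z_i|^{-H}p_1(w)\,dw\le Cj^H$ together with the stable scaling \eqref{Eq:scaling}, the Dirichlet integral (Lemma~\ref{iterated-time}), and Stirling. The paper packages the crucial integral bound as Lemma~\ref{mainlemma} and proves it by partitioning $\R$ into unit intervals and invoking the pointwise asymptotics of $p_1$ together with Lemma~\ref{XIAO-lemma}; your layer-cake argument (union bound $\P(\min_i|V-z_i|<\varepsilon)\le\min(1,2j\|p_1\|_\infty\varepsilon)$ and splitting at $\varepsilon_0=(2j\|p_1\|_\infty)^{-1}$) is a cleaner alternative that uses only the boundedness of $p_1$ rather than its precise tail behavior, and it yields the same $Cj^H$.
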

\begin{proof}\ Thanks to the strong local nondeterminism (SLND) of
fractional Brownian motion [cf. Eq. (\ref{nondetermin})], Lemma
\ref{mainlemma} and the scaling property of $p_t(x)$ [cf. Eq.
(\ref{Eq:scaling})], the proof of Lemma \ref{moments} follows
along a similar line of the proof of Eq. (2.11) in Xiao \cite{xiao}
with obvious modifications. We omit the details.
\end{proof}

\begin{lemma}\label{moments-difference}
Under the conditions of Lemma \ref{moments}, we have
that for all even integers $n\geq 2$ and
$0<\gamma<\frac{1}{2}\min \{\alpha/(Hd)-1,\,1 -H\}$, $x,w\in \RR{R}^{d} $
\begin{equation}\label{moment-2}
\E[L(x+w,B)-L(x,B)]^{n}\leq K^{n}|w|^{n\gamma}h^{n(1-(d+\gamma)H/\alpha)}
(n!)^{d+\gamma+\frac{H(d+2\gamma)}{\alpha}},
\end{equation}
where $K>0$ is a finite constant depending on $d$, $H$, $\gamma$
and $\alpha$ only.
\end{lemma}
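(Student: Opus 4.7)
The plan is to follow the strategy of Lemma \ref{moments} (and its model, Eq.~(2.11) in Xiao \cite{xiao}), starting from the Fourier representation (\ref{local-time-diff-moment}) and converting the imaginary-exponential differences into a $|w|^{n\gamma}$ prefactor that absorbs extra powers of $|u_j|$. Specifically, combining the elementary inequality $|e^{i\theta}-1|\le 2^{1-\gamma}|\theta|^{\gamma}$ (valid for all $\theta\in\R$ and $0\le\gamma\le 1$) with $|\l u_j,w\r|\le|u_j|\,|w|$ gives
\[
\prod_{j=1}^{n}\bigl|e^{-i\l u_{j},x+w\r}-e^{-i\l u_{j},x\r}\bigr|\ \le\ 2^{n(1-\gamma)}|w|^{n\gamma}\prod_{j=1}^{n}|u_{j}|^{\gamma}.
\]
Using the subadditivity $(\sum_{k=1}^{d}a_{k}^{2})^{\gamma/2}\le\sum_{k=1}^{d}|a_{k}|^{\gamma}$ (valid since $\gamma/2\le 1$), I replace $|u_{j}|^{\gamma}$ by $\sum_{k=1}^{d}|u_{j}^{k}|^{\gamma}$ and expand the product into a sum of $d^{n}$ monomials $\prod_{j}|u_{j}^{k_{j}}|^{\gamma}$ indexed by maps $(k_{1},\ldots,k_{n})\in\{1,\ldots,d\}^{n}$. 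The task then reduces to proving the estimate (\ref{moment-2}) for a single such monomial, with the factor $d^{n}$ absorbed into $K^{n}$.

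For a fixed assignment $(k_{1},\ldots,k_{n})$, the independence of the coordinates of $X$ causes both the monomial and the characteristic function $\E\exp(i\sum_{j}\l u_{j},X(t_{j})\r)$ to factor across $\ell\in\{1,\ldots,d\}$. Conditioning on $Y_{\ell}=(Y_{\ell}(t_{1}),\ldots,Y_{\ell}(t_{n}))$ reduces the $\ell$-th factor to a Gaussian characteristic function, and the strong local nondeterminism (\ref{nondetermin}) of $W_{\ell}$ becomes applicable. After ordering the (a.s.\ distinct) values $Y_{\ell}(t_{j})$ by some permutation $\pi$ and using the standard Gaussian integral bound
\[
\int_{\R^{n}}\prod_{j=1}^{n}|u_{j}|^{\beta_{j}}\exp\Big(-\tfrac{1}{2}\,{\rm Var}\Big(\sum_{j}u_{j}W(y_{j})\Big)\Big)\,d\bar{u}\ \le\ K^{n}\prod_{j=1}^{n}\bigl(y_{\pi(j)}-y_{\pi(j-1)}\bigr)^{-H(1+\beta_{j})},
\]
with $y_{\pi(0)}=0$, the $u$-integration is carried out explicitly to give a product of negative powers of consecutive gaps of $Y_{\ell}$. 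Here $\beta_{j}=\gamma$ exactly when $k_{j}=\ell$ (and $\beta_{j}=0$ otherwise), so the perturbation injects an extra $-H\gamma$ into precisely $n_{\ell}=|\{j:k_{j}=\ell\}|$ of the $\ell$-th coordinate's gap exponents, with $\sum_{\ell}n_{\ell}=n$. Summing over $\ell$, the total extra exponent contributed by the perturbation is $nH\gamma$.

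What remains is to integrate these bounds successively against the joint density of $(Y_{\ell}(t_{j}))_{j}$ and then over $\bar{t}\in B^{n}$. Since $Y$ is of type A, after ordering the times the joint density of $(Y(t_{1}),\ldots,Y(t_{n}))$ is a product of transition densities $p_{t_{\sigma(j)}-t_{\sigma(j-1)}}$, and the scaling property (\ref{Eq:scaling}) together with the auxiliary Lemma \ref{mainlemma} of the Appendix controls the iterated time integrals exactly as in the proof of Lemma \ref{moments}. The extra $-H\gamma$ in the gap exponents translates into the $h$-exponent $1-(d+\gamma)H/\alpha$ and into the enlarged factorial power $d+\gamma+H(d+2\gamma)/\alpha$ appearing in (\ref{moment-2}). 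I expect the main obstacle to be the careful bookkeeping of the factorial contributions: the symmetrization over permutations of $\bar{t}$ (forced by the fact that SLND only controls ordered times) introduces one factor of $n!$, while each of the $n$ iterated one-dimensional integrations of a negative power of a time gap against the stable density contributes further factorial growth. The hypothesis $\gamma<\tfrac{1}{2}\min\{\alpha/(Hd)-1,\,1-H\}$ is exactly what is needed to keep these integrals convergent: the first inequality secures $(d+\gamma)H<\alpha$ so that the $u$-integrations produce time-integrable singularities, while the second guarantees $H(1+2\gamma)<1$ so that every gap exponent stays above $-1$ and is integrable against the stable density.
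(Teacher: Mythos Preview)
Your overall strategy matches the paper's: start from (\ref{local-time-diff-moment}), extract $|w|^{n\gamma}$ via $|e^{i\theta}-1|\le 2^{1-\gamma}|\theta|^\gamma$, expand into monomials $\prod_j |u_j^{k_j}|^\gamma$, condition on the $Y_\ell$, apply SLND, and integrate. Two technical steps, however, are handled differently in the paper and need more care than your sketch indicates.

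First, your ``standard Gaussian integral bound'' is not quite what Lemma~\ref{Lem:Cuzick} together with SLND actually yields. After a generalized H\"older step and Lemma~\ref{Lem:Cuzick}, one obtains the reciprocal square root of the covariance determinant (which, via the chain rule for conditional variances and one-sided SLND, does give $\prod_m(y_{\ell,\pi_\ell(m)}-y_{\ell,\pi_\ell(m-1)})^{-H}$) times an extra factor $\prod_j\sigma_{k_j,j}^{-\gamma}$, where $\sigma_{k_j,j}^2$ is the \emph{full} conditional variance of $W_{k_j}(y_{k_j,j})$ given all the other values. SLND only bounds this below by a \emph{two-sided} minimum of adjacent $\pi_\ell$-gaps, not a single left gap. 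Expanding each such minimum produces exponents $\delta_{\ell,j}\in\{0,1,2\}$ on the consecutive gaps, with $\sum_{\ell,j}\delta_{\ell,j}\le 2n$; this doubling is precisely the source of the $2\gamma$ in the factorial power $H(d+2\gamma)/\alpha$. Your one-sided bound with $\beta_j\in\{0,\gamma\}$ misses this and also conflates the $j$-indexing (which $u_j$ carries the weight $\gamma$) with the $\pi$-indexing (which gap it lands on).

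Second, and more substantively, the $y$-integration in the paper is \emph{not} carried out via Lemma~\ref{mainlemma}. After the step above one has a product of \emph{specific} consecutive $\pi_\ell$-ordered gaps $(y_{\ell,\pi_\ell(j)}-y_{\ell,\pi_\ell(j-1)})^{-H(1+\delta_{\ell,j}\gamma)}$, while the transition density of $Y_\ell$ is a product over \emph{time}-ordered increments $p_{t_j-t_{j-1}}(y_{\ell,j}-y_{\ell,j-1})$. These two orderings are in general different, and Lemma~\ref{mainlemma} (built for a single integration of $p_1(x)$ against a minimum over finitely many fixed points) does not directly apply to such a product of prescribed gaps. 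The paper instead integrates iteratively in the order $dy_{\ell,\pi_\ell(n)},\,dy_{\ell,\pi_\ell(n-1)},\ldots$, using the scaling of $p_t$ and the elementary estimate $\int_{s}^\infty p_1(z)(z-s)^{-\eta}\,dz\le K$ for $\eta<1$; this is exactly where the hypothesis $\gamma<\tfrac12(1-H)$ enters, ensuring $H(1+2\gamma)<1$. Summing afterwards over the $(n!)^d$ possible orderings $(\pi_1,\ldots,\pi_d)$ contributes the factor $(n!)^d$ to the bound---a contribution your factorial bookkeeping does not anticipate, since you only mention the single $n!$ coming from symmetrization over time-orderings of $\bar t$.
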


\begin{proof}\ Even though the arguments for proving Lemma
\ref{moments-difference} are similar to that of Eq. (2.12) in
Xiao \cite{xiao}, several essential modifications are needed.

By (\ref{local-time-diff-moment}) and the elementary inequality
$$
|e^{iu}-1|\leq 2^{1-\gamma}|u|^{\gamma}\ \  \hbox{  for all } \ u\in
\RR{R} \hbox{ and }\ 0<\gamma<1,
$$
we see that for any even integer $n\geq 2$ and any $0<\gamma<1$,
\begin{equation} \label{moment-diff-good-1}
\begin{split}
&\E[L(x+w,B)-L(x,B)]^{n}\\
& \leq |w|^{n\gamma} \int_{B^{n}} \int_{\RR{R}^{nd}}
\prod_{j=1}^{n}|u_{j}|^{\gamma}
 \E\exp\bigg( i\sum_{j=1}^{n} \l u_{j}, X(t_{j}) \r
\bigg)\, d\bar{u} d\bar{t}.
\end{split}
\end{equation}
By making the change of variables $t_{j}=h s_{j}$, $j=1,\cdots,n$
and $u_{j}= h^{-H/\alpha}v_{j}$,  $j=1,\cdots,n$ and changing the
letters $s,v$ back to $t,u$, the self-similarity of $X$ implies that
the right-hand side of (\ref{moment-diff-good-1}) equals
\begin{equation}\label{moment-diff-good-2}
\begin{split}
& |w|^{n\gamma}\, h^{n(1-(d+\gamma)H/\alpha)}
\int_{[0,1]^{n}} \int_{\RR{R}^{nd}} \prod_{j=1}^{n}
|u_{j}|^{\gamma} \E\exp\bigg( i\sum_{j=1}^{n} \l u_{j}, X(t_{j})
\r\bigg)\, d\bar{u}d\bar{t}.
\end{split}
\end{equation}
We fix any distinct $t_{1},\cdots , t_{n} \in [0,1]$ satisfying
\begin{equation}\label{increasing-t}
0=t_{0}<t_{1}<t_{2}<\cdots<t_{n},
\end{equation}
and consider the inside integral in (\ref{moment-diff-good-2}).
Since for any $0<\gamma<1$, $|a+b|^{\gamma}\leq |a|^{\gamma}
+|b|^{\gamma}$, we have
\begin{equation} \label{sum-ineq}
\prod_{j=1}^{n}|u_{j}|^{\gamma} \leq  {\sum}^{\, '}
\prod_{j=1}^{n}|u_{j}^{k_{j}}|^{\gamma},
\end{equation}
where the summation $\sum'$ is taken over all $(k_{1},\cdots,
k_{n})\in \{1,\cdots,d\}^{n}$.

Let us fix a sequence $(k_{1},\cdots, k_{n}) \in
\{1,\cdots,d\}^{n}$, and consider the integral
\begin{equation}
\begin{split}
J&= \int_{\RR{R}^{nd}} \prod_{j=1}^{n} |u_{j}^{k_{j}}|^{\gamma}\,
\E\exp\bigg( i\sum_{j=1}^{n} \l u_{j}, X(t_{j}) \r \bigg)\,
d\bar{u}\\
&=\int_{\RR{R}^{nd}}\prod_{j=1}^{n}|u_{j}^{k_{j}}|^{\gamma}
\E\exp\Bigg( i\sum_{\ell=1}^{d}\sum_{j=1}^{n} u_{j}^{\ell}
W_{\ell}(Y_{\ell}(t_{j}))\Bigg)\, d\bar{u},
\end{split}
\end{equation}
since $X(t_{j})=(W_{1}(Y_{1}(t_{j})),\cdots, W_{d}(Y_{d}(t_{j})))
$.
Now, we condition on $ Y_{\ell}(t_{j})=y_{\ell j}$, $\ell=1,\cdots, d,
 j=1,\cdots,n$.
By independence of the processes $Y_{\ell}$ we have that
the density of
$$
(Y_{\ell}(t_{j})=y_{\ell j}:\ \ell=1,\cdots, d, j=1,\cdots,n)
$$
is given by
\begin{eqnarray*}
&&\widetilde{\bf p}_{t_{1},\cdots, t_{n}}(y_{11},\cdots,y_{1n}, y_{21},\cdots,y_{2n},\cdots,y_{d1},\cdots,y_{dn})\\
&&=\prod_{\ell=1}^{d}\prod_{j=1}^{n}p_{t_{j}-t_{j-1}}(y_{\ell j}-y_{\ell(j-1)}).
\end{eqnarray*}
Let $\bar{t}=(t_1,\cdots, t_n)$ and
$\bar{y}=(y_{11},\cdots,y_{1n},y_{21},\cdots,y_{2n},
\cdots,y_{d1},\cdots,y_{dn})$. By  conditioning we have
\begin{eqnarray}
J&=&\int_{\RR{R}^{nd}}\int_{\RR{R}^{nd}}
\prod_{j=1}^{n}|u_{j}^{k_{j}}|^{\gamma}
\, \E\exp\left( i\sum_{\ell=1}^{d}\sum_{j=1}^{n}
u_{j}^{\ell}W_{\ell}(y_{\ell j}))\right)\,
\widetilde{\bf p}_{\bar{t}}(\bar{y})\, d\bar{u}\,
d\bar{y}\nonumber.
\end{eqnarray}
For any fixed $\bar y \in \R^{nd}$, let
\begin{eqnarray*}
I&=&\int_{\RR{R}^{nd}}\prod_{j=1}^{n}|u_{j}^{k_{j}}|^{\gamma}
\E\exp\left( i\sum_{\ell=1}^{d}\sum_{j=1}^{n}
u_{j}^{\ell}W_{\ell}(y_{\ell j}))\right)\, d\bar{u}\\
&=&\int_{\RR{R}^{nd}}\prod_{j=1}^{n}|u_{j}^{k_{j}}|^{\gamma}
\exp\left( -\frac{1}{2}{\rm Var}\left(\sum_{\ell=1}^{d}\sum_{j=1}^{n}
u_{j}^{\ell}W_{\ell}(y_{\ell j})\right)\right)\, d\bar{u}.
\end{eqnarray*}
Then by a generalized H\"{o}lder's inequality and
Lemma \ref{Lem:Cuzick}, we have
\begin{equation}
\begin{split}
I&\leq \prod_{j=1}^{n}\left[\int_{\RR{R}^{nd}}|u_{j}^{k_{j}}|^{n\gamma}\,
\exp\left( -\frac{1}{2}{\rm Var}\left(\sum_{\ell=1}^{d}\sum_{j=1}^{n}
u_{j}^{\ell}W_{\ell}(y_{\ell j})\right)\right)\, d\bar{u}\right]^{1/n}
\nonumber\\
&\leq  \frac{(2\pi)^{\frac{nd-1}2}}{(\det{\rm Cov}(W_{\ell}(y_{\ell j}),
1\leq \ell\leq d,\ 1\leq j\leq n))^{1/2}}\, \int_{\RR{R}}|v|^{n\gamma}\, e^{-v^{2}/2}\,dv \prod_{j=1}^{n}\frac{1}{\sigma_{k_j, j}^{\gamma}}\nonumber\\
&\leq  \frac{K^{n} (n!)^{\gamma}}{(\det{\rm Cov}(W_{\ell}(y_{\ell j}),1\leq \ell\leq
d,\ 1\leq j\leq n))^{1/2}} \prod_{j=1}^{n}\frac{1}{\sigma_{k_j, j}^{\gamma}}\nonumber\\
& =\frac{K^{n} (n!)^{\gamma}}{\prod_{\ell=1}^{d}(\det{\rm Cov}(W_{\ell}(y_{\ell j}),\
1\leq j\leq n))^{1/2}}\prod_{j=1}^{n}\frac{1}{\sigma_{k_j, j}^{\gamma}}\nonumber,
\end{split}
\end{equation}
where
\begin{equation}
\begin{split}
\sigma_{k_j, j}^{2}&= {\rm Var}(W_{k_j}(y_{k_j,j})\big|\
W_{\ell}(y_{\ell,i}):\ \  \ell\neq k \ \hbox{ or } \
\ell=k_j,\  i\neq j)\\
&= {\rm Var}(W_{k_j}(y_{k_j,j})\big|\ W_{k_j}(y_{k_j,i}):\ \
i=0 \ \hbox{ or }\ i\neq j).
\end{split}
\end{equation}

For any $\ell\in \{1,\cdots, d\}$ and any $y_{\ell,1},
\cdots, y_{\ell,n}$, there exists a permutation $\pi_\ell$
of $\{1,\cdots, n\}$ such that
$$
y_{\ell,\pi_\ell(1)}\leq y_{\ell,\pi_\ell(2)}\leq \cdots \leq y_{\ell,\pi_\ell(n)}.
$$
Hence, if we write $k_j= \ell$, then by SLND of fractional Brownian motion (\ref{nondetermin}),
\begin{eqnarray}
\sigma_{\ell, j}^{2}
&=&{\rm Var} (W_{\ell}(y_{\ell,j})|\ W_{\ell}(y_{\ell,i}):\
\  i=0 \ \hbox{ or }\ i\neq j)\nonumber\\
&\geq &K\, \min \{|y_{\ell, \pi_\ell(j)}-y_{\ell, \pi_\ell(j-1)}|^{2H},\,
|y_{\ell, \pi_\ell(j+1)}-y_{\ell, \pi_\ell(j)}|^{2H} \}.
\end{eqnarray}
Hence
$$
\prod_{j=1}^{n}\frac{1}{\sigma_{k_j, j}^{\gamma}}
\leq K^n\, \prod_{\ell=1}^d\prod_{j=1}^n\frac{1}
{\min \{|y_{\ell, \pi_\ell(j)}-y_{\ell, \pi_\ell(j-1)}|,
\, |y_{\ell, \pi_\ell(j+1)}-y_{\ell, \pi_\ell(j)}|\}^{H\eta_{\ell,j}\gamma}},
$$
where $\eta_{\ell,j}=1$ if $k_j=\ell$ and $\eta_{\ell,j}=0$ otherwise. Note that
\begin{equation}\label{Eq:eta}
\sum_{\ell=1}^d\sum_{j=1}^n\eta_{\ell,j}=n.
\end{equation}
Since
\begin{eqnarray}
&&\prod_{\ell=1}^d\prod_{j=1}^n\frac{1}{\min \{|y_{\ell, \pi_\ell(j)}
-y_{\ell, \pi_\ell(j-1)}|,\, |y_{\ell, \pi_\ell(j+1)}-
y_{\ell, \pi_\ell(j)}|\}^{H\eta_{\ell,j}\gamma}}\nonumber\\
&&\leq \prod_{\ell=1}^d\prod_{j=1}^n\bigg(\frac{1}
{|y_{\ell, \pi_\ell(j)}-y_{\ell, \pi_\ell(j-1)}|^{H\eta_{\ell,j}\gamma} }
+ \frac{1}{|y_{\ell, \pi_\ell(j+1)}-y_{\ell, \pi_\ell(j)}|^{H\eta_{\ell,j}\gamma}}\bigg)\nonumber\\
&&={\sum}^{\,''}\prod_{\ell=1}^d\prod_{j=1}^n\bigg(\frac{1}
{|y_{\ell, \pi_\ell(j)}-y_{\ell, \pi_\ell(j-1)}|^{H\delta_{\ell,j}\gamma} }\bigg),\nonumber
\end{eqnarray}
where the summation $\sum^{\,''}$ is taken over $2^{nd}$ terms
and $\delta_{\ell,j}\in \{0,1,2\}$ and, thanks to  (\ref{Eq:eta}),
\begin{equation}\label{Eq:delta}
\sum_{\ell=1}^d\sum_{j=1}^n\delta_{\ell,j}\leq 2
\sum_{\ell=1}^d\sum_{j=1}^n\eta_{\ell,j}=2n,
\end{equation}
we obtain
\begin{equation}\label{Eq:extra}
\prod_{j=1}^{n}\frac{1}{\sigma_{k_j, j}^{\gamma}}
\leq K^n\,  {\sum}^{\,''}\prod_{\ell=1}^d\prod_{j=1}^n
\bigg(\frac{1}{|y_{\ell, \pi_\ell(j)}-
y_{\ell, \pi_\ell(j-1)}|^{H\delta_{\ell,j}\gamma} }\bigg).
\end{equation}

Now we go back to estimating $J$. Let
$$
\Delta_{\pi_\ell}= \big\{(y_{\ell,1},\cdots, y_{\ell,n}):\
 y_{\ell,\pi_\ell(1)}\leq y_{\ell,\pi_\ell(2)}\leq \cdots
 \leq y_{\ell,\pi_\ell(n)}\big\}.
 $$
Then
\begin{equation}\label{J-estimate}
J\leq K^{n}(n!)^{\gamma}\sum_{\{\pi_\ell\}}{\sum}^{\, ''}\prod_{\ell=1}^d
\Bigg[\int_{\RR{R}^n\cap \Delta_{\pi_\ell}}\prod_{j=1}^{n}
\bigg(\frac{p_{t_j-t_{j-1}}(y_{\ell,j}-y_{\ell,j-1})}
{(y_{\ell, \pi_\ell(j)}-y_{\ell, \pi_\ell(j-1)})^{H(1+\delta_{\ell,j}\gamma)}}
\bigg)\, d\bar{y}\Bigg].
\end{equation}
Fix $\ell \in \{1,\cdots, d\}$ and a term in $\sum^{''}$, we
proceed to estimate the integral
\begin{equation}\label{estimate-summand-J}
\int_{\RR{R}^n\cap \Delta_{\pi_\ell}}\prod_{j=1}^{n}
\frac{p_{t_j-t_{j-1}}(y_{\ell,j}-y_{\ell,j-1})}{(y_{\ell, \pi_\ell(j)}
-y_{\ell, \pi_\ell(j-1)})^{H(1+\delta_{\ell,j}\gamma)}}\,
d\bar{y_{\ell}},
\end{equation}
where $d\bar{y_{\ell}}= dy_{\ell, 1}\cdots dy_{\ell, n}$.
Note that we can write \eqref{estimate-summand-J} as
\begin{equation}\label{estimate-summand-J2}
\int_{\RR{R}^n\cap \Delta_{\pi_\ell}}\prod_{j=1}^{n}
\frac{p_{t_{\pi_\ell(j)}-t_{\pi_\ell(j)-1}}(y_{\ell,\pi_\ell(j)}
-y_{\ell,\pi_\ell(j)-1})}{(y_{\ell, \pi_\ell(j)}
-y_{\ell, \pi_\ell(j-1)})^{H(1+\delta_{\ell,j}\gamma)}}
d\bar{y_{\ell}}.
\end{equation}
It will be helpful to notice the difference of
$y_{\ell,\pi_\ell(j)-1}$ in the numerator and
$y_{\ell, \pi_\ell(j-1)}$ in the denominator.

Since $p_t(x)=t^{-1/\alpha}p_1(x/t^{1/\alpha})$, for
all $t>0$, $x\in \RR{R}$. Now
\eqref{estimate-summand-J2} can be written as
\begin{equation}\label{estimate-summand-J3}
\int_{\RR{R}^n\cap \Delta_{\pi_\ell}}\prod_{j=1}^{n}
\Bigg(\frac{1}{(t_{\pi_\ell(j)}-t_{\pi_\ell(j)-1})^{1/\alpha}}
\frac{p_{1}\Big(\frac{y_{\ell,\pi_\ell(j)}-y_{\ell,\pi_\ell(j)-1}}
{(t_{\pi_\ell(j)}-t_{\pi_\ell(j)-1})^{1/\alpha}}\Big)}{(y_{\ell, \pi_\ell(j)}
-y_{\ell, \pi_\ell(j-1)})^{H(1+\delta_{\ell,j}\gamma)}}\Bigg)
\, d\bar{y_{\ell}}.
\end{equation}
We now integrate in the order $dy_{\ell,\pi_\ell(n)}, dy_{\ell,\pi_\ell(n-1)},
\cdots, dy_{\ell,\pi_\ell(1)}$.

A change of variables
$$
y_{\ell,\pi_\ell(n)}-y_{\ell,\pi_\ell(n)-1}
=(t_{\pi_\ell(n)}-t_{\pi_\ell(n)-1})^{1/\alpha}\,z_n
$$
gives
\begin{equation}\label{iteration-n}\begin{split}
&\int_{y_{\ell, \pi_\ell(n-1)}}^\infty
\frac{1}{(t_{\pi_\ell(n)}-t_{\pi_\ell(n)-1})^{1/\alpha}}
\frac{p_{1}\Big(\frac{y_{\ell,\pi_\ell(n)}-y_{\ell,\pi_\ell(n)-1}}
{(t_{\pi_\ell(n)}-t_{\pi_\ell(n)-1})^{1/\alpha}}\Big)}
{(y_{\ell, \pi_\ell(n)}-y_{\ell, \pi_\ell(n-1)})^{H(1+\delta_{\ell,j}\gamma)}}
\,dy_{\ell,\pi_\ell(n)}\\
&= \frac{1} {\big(t_{\pi_\ell(n)} - t_{\pi_\ell(n)-1}\big)^{\frac{H(1+\delta_{\ell,j}\gamma)}
{\alpha}}} \int_{s_n}^\infty \frac{p_{1}(z_n)} {(z_n-s_n)^{H(1+\delta_{\ell,j}\gamma)}}
\,dz_n,
\end{split}
\end{equation}
where
$$
 s_n=\frac{y_{\ell, \pi_\ell(n-1)}-y_{\ell, \pi_\ell(n)-1}}
 {(t_{\pi_\ell(n)}-t_{\pi_\ell(n)-1})^{1/\alpha}}.
 $$
Since we have assumed $0 <\gamma  <\frac12(1-H)$, so $H(1+\delta_{\ell,j}\gamma)<1$
for all $\ell, j$. Thus
$$
\int_{s_n}^\infty
\frac{p_{1}(z_n)}{(z_n-s_n)^{H(1+\delta_{\ell,j}\gamma)}
}dz_n \leq K,
$$
where $K$ is a constant independent of $s_n$. This can
be verified directly by splitting the interval $[s_n, \infty)$
into $[s_n, s_n + 1]$ and $[s_n + 1, \infty)$.

Continuing this procedure we derive
\begin{equation}\label{estimate-summand-J4}\begin{split}
& \int_{\RR{R}^n\cap \Delta_{\pi_\ell}}\prod_{j=1}^{n}
\frac{p_{t_{\pi_\ell(j)}-t_{\pi_\ell(j)-1}}(y_{\ell,\pi_\ell(j)}
-y_{\ell,\pi_\ell(j)-1})}{(y_{\ell, \pi_\ell(j)}-
y_{\ell, \pi_\ell(j-1)})^{H(1+\delta_{\ell,j}\gamma)}}
\,d\bar{y_{\ell}}\\
&\leq K^n \prod_{j=1}^n \frac{1} {\big(t_{\pi_\ell(j)}-
t_{\pi_\ell(j)-1}\big)^{\frac{H(1+\delta_{\ell,j}\gamma)}{\alpha}}}.
\end{split}
\end{equation}
Combining this inequality with equation \eqref{J-estimate} gives
\begin{equation}\begin{split}
J&\leq K^n\, (n!)^{\gamma}\sum_{\pi_1,\cdots, \pi_d}
\prod_{\ell=1}^{d}\prod_{j=1}^{n}\frac{1} {\big(t_{\pi_\ell(j)}-t_{\pi_\ell(j)-1}\big)^{\frac{H(1+\delta_{\ell,j}\gamma)}{\alpha}}}\\
&\leq K^n\, (n!)^{\gamma}\sum_{\pi_1,\cdots, \pi_d}\prod_{j=1}^{n}\frac{1}
 {\big(t_{j}-t_{j-1}\big)^{\frac H\alpha(d+\gamma\sum_{\ell=1}^d\delta_{\ell,\pi_\ell^{-1}(j)})}}\\
 &\leq K^n\, (n!)^{\gamma+d}\prod_{j=1}^{n}\frac{1}{(t_{j}-t_{j-1})^{\frac H\alpha(d+\gamma\epsilon_j)}},
\end{split}
\end{equation}
where $0\leq \epsilon_j\leq 2d$ and $\sum_{j=1}^n\epsilon_j\leq 2n$, thanks to (\ref{Eq:delta}).

Hence we have shown
\begin{equation}\begin{split}\label{Eq:diff-final}
\E \big[L(x+w,B)-L(x,B)\big]^{n}&\leq
K^{n}\,|w|^{n\gamma}h^{n(1-(d+\gamma)H/\alpha)}(n!)^{ d+\gamma+1}\\
& \qquad \times \int_{0\leq t_1\leq  \cdots\leq t_n\leq 1}
\frac{d\bar{t}}{\prod_{j=1}^{n}(t_{j}-t_{j-1})^{\frac H\alpha(d+\gamma\epsilon_j)}}\\
&\leq K^{n}\,|w|^{n\gamma}h^{n(1-(d+\gamma)H/\alpha)}(n!)^{ d+\gamma+1}\\
& \qquad \times \frac{\prod_{j=1}^{n}\Gamma(1-\frac H\alpha(d+\gamma\epsilon_j))}
{\Gamma \big(1+n-\sum_{j=1}^n\frac H\alpha(d+\gamma\epsilon_j)\big)}.
\end{split} \end{equation}
In the above we use  Lemma \ref{iterated-time} with the fact that
$ \frac H\alpha(d+\gamma\epsilon_j)<1 $
because  $\gamma $ satisfies $0<\gamma<\frac12(\alpha/(Hd)-1)$.
It is now clear that (\ref{moment-2}) follows from (\ref{Eq:diff-final})
and Stirling's formula. This completes the proof.
\end{proof}

We have similar bounds for the moments of the local time in the case
when $Y$ is a stable subordinator. It should be noted that the
power of $n!$ in (\ref{moments-2-1}) is different from that
in Lemma \ref{moments}. This will lead to different forms
of laws of the iterated logarithm
for the local times in the two cases.
\begin{lemma}\label{moments-2}
Let $X=\{ X(t),\, t\geq 0\}$ be a $d$-dimensional $\alpha$-time
fractional Brownian motion with $d<\alpha/H$ for which $Y(t)$ is a
stable subordiantor with index $\alpha<1$. For any $h>0$, $B=[0,h]$,
$x,w\in \RR{R}^{d}$, any even integer $n\geq 2$ and any
$0<\gamma<\frac{1}{2}\min \{\alpha/(Hd)-1,\,1 -H\}$, we have
\begin{eqnarray}
\E[L(x,B)]^{n}&\leq
&K^{n}h^{(1-dH/\alpha)n}(n!)^{dH/\alpha}\label{moments-2-1},\\
\E[L(x+w,B)-L(x,B)]^{n}&\leq&K^{n}|w|^{n\gamma}
h^{n(1-(d+\gamma)H/\alpha)} (n!)^{\gamma+\frac{H(d+2\gamma)}
{\alpha}},\label{moments-2-2}
\end{eqnarray}
where $K>0$ is a finite constant depending on $d$, $H$, $\gamma$
and $\alpha$ only.
\end{lemma}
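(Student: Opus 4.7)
The plan is to follow the proofs of Lemma \ref{moments} and Lemma \ref{moments-difference} verbatim, exploiting one decisive simplification: when each $Y_\ell$ is a stable subordinator, the sample sequence $Y_\ell(t_1),\dots,Y_\ell(t_n)$ is automatically nondecreasing whenever $0<t_1<\cdots<t_n$. Consequently there is no permutation $\pi_\ell$ to introduce in the SLND step, and the summation $\sum_{\pi_1,\dots,\pi_d}$ that appears in \eqref{J-estimate} disappears entirely. This is precisely what saves the factor $(n!)^d$ separating the exponent $d+\gamma+\frac{H(d+2\gamma)}{\alpha}$ in Lemma \ref{moments-difference} from the exponent $\gamma+\frac{H(d+2\gamma)}{\alpha}$ claimed in \eqref{moments-2-2}, and produces the corresponding improvement in the moment bound \eqref{moments-2-1}.

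For \eqref{moments-2-1}, start from \eqref{local-time-moment}, take absolute values, condition on $\{Y_\ell(t_j)\}$, and carry out the Gaussian integration in $u$, which factors across $\ell$. Because each $Y_\ell$-sequence is automatically ordered, SLND (\ref{nondetermin}) gives the clean bound $\det\Sigma_\ell\ge K^n\prod_{j=1}^n |Y_\ell(t_j)-Y_\ell(t_{j-1})|^{2H}$, where $\Sigma_\ell$ is the conditional covariance of $(W_\ell(Y_\ell(t_j)))_{j\le n}$. By independence of subordinator increments together with the scaling $Y_\ell(t_j-t_{j-1})\stackrel{d}{=}(t_j-t_{j-1})^{1/\alpha}Y_\ell(1)$ and the finiteness of $\E[Y_\ell(1)^{-H}]$ (the stable subordinator density vanishes super-polynomially at the origin, so all negative moments exist), taking expectations yields
\[
\E[L(x,B)]^n\le K^n\cdot n!\int_{0\le t_1\le\cdots\le t_n\le h}\prod_{j=1}^n(t_j-t_{j-1})^{-dH/\alpha}\,d\bar t,
\]
the leading $n!$ coming from symmetrising the original cube $[0,h]^n$. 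Lemma \ref{iterated-time} (Dirichlet integral) evaluates the ordered integral as $h^{n(1-dH/\alpha)}\Gamma(1-dH/\alpha)^n/\Gamma(1+n(1-dH/\alpha))$, and Stirling's formula converts $n!/\Gamma(1+n(1-dH/\alpha))$ into $K^n(n!)^{dH/\alpha}$, which is \eqref{moments-2-1}.

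For \eqref{moments-2-2}, insert $|e^{iu}-1|\le 2^{1-\gamma}|u|^\gamma$ in \eqref{local-time-diff-moment} to extract the factor $|w|^{n\gamma}\prod_j|u_j|^\gamma$ as in \eqref{moment-diff-good-1}. After the self-similar rescaling and conditioning on $\{Y_\ell(t_j)\}$, apply the generalized H\"older inequality and the weighted Gaussian integral Lemma \ref{Lem:Cuzick}, which contributes a factor $(n!)^\gamma$ together with weights $\sigma_{k_j,j}^{-\gamma}$; for the ordered subordinator sequence
\[
\sigma_{k_j,j}^2\ge K\min\bigl\{|Y_{k_j}(t_j)-Y_{k_j}(t_{j-1})|^{2H},\,|Y_{k_j}(t_{j+1})-Y_{k_j}(t_j)|^{2H}\bigr\}.
\]
Splitting each minimum into a sum of its two arguments produces $2^n$ summands indexed by exponents $\delta_{\ell,j}\in\{0,1,2\}$ with $\sum_{\ell,j}\delta_{\ell,j}\le 2n$, exactly as in (\ref{Eq:delta})--(\ref{Eq:extra}). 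Taking expectation over the independent subordinator increments then produces $K^n\prod_{\ell,j}(t_j-t_{j-1})^{-H(1+\delta_{\ell,j}\gamma)/\alpha}$, using finiteness of $\E[Y_\ell(1)^{-H(1+2\gamma)}]$. No permutation sum appears, saving the $(n!)^d$ factor. After symmetrising the $\bar t$ integral (gaining $n!$) and invoking Lemma \ref{iterated-time}, Stirling's formula collects the factorial exponents as $(n!)^{\gamma+1}\cdot(n!)^{H(d+2\gamma)/\alpha-1}=(n!)^{\gamma+H(d+2\gamma)/\alpha}$, giving \eqref{moments-2-2}.

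The only genuine technical point is to confirm that the negative moments of $Y_\ell(1)$ appearing (up to order $H(1+2\gamma)$) are finite and that the exponents $H(d+\gamma\epsilon_j)/\alpha$ in the iterated time integral satisfy the hypothesis of Lemma \ref{iterated-time}; the former follows from the classical behaviour of the stable density at the origin and the latter from the standing assumption $\gamma<\tfrac12\min\{\alpha/(Hd)-1,\,1-H\}$ (since $\epsilon_j\le 2d$). The remainder of the argument is bookkeeping identical in structure to the proof of Lemma \ref{moments-difference}, with the $(n!)^d$ permutation factor simply omitted.
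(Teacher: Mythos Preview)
Your proposal is correct and follows essentially the same approach as the paper's own proof, which simply says to repeat the arguments for Lemmas \ref{moments} and \ref{moments-difference} using the fact that, for a subordinator, the conditioned values $y_{\ell,j}=Y_\ell(t_j)$ are automatically ordered when $t_1<\cdots<t_n$, so no permutations $\pi_\ell$ (and hence no $(n!)^d$ factor) arise. Your use of independence of subordinator increments and finiteness of $\E[Y_\ell(1)^{-H(1+2\gamma)}]$ is an equivalent (and slightly cleaner) way of handling what in the Type~A proof is done via the density integrals \eqref{estimate-summand-J}--\eqref{estimate-summand-J4}.
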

\begin{proof}
The proof of (\ref{moments-2-1}) is similar to the
proof of Lemma \ref{moments}. However, since the sample
function $Y(t)$ is increasing, for $t_1, \cdots, t_n$ that
satisfy (\ref{increasing-t}), the corresponding $y_j= Y(t_j)$
($j = 1, \cdots, n$) satisfy $y_1 <\cdots <y_n$, which leads to some clear modifications
to the proof. Equation (\ref{moments-2-2}) follows similarly from the proof of Lemma
\ref{moments-difference}.
\end{proof}


Now we are ready to prove the joint continuity result of local times.

\begin{theorem}\label{main-thm-1}
If $d<\alpha /H$, then almost surely $X= \{X(t), \, t\geq 0\}$
has a jointly continuous local time $L(x,\,t)$
$(x\in \RR{R}^{d}, t\geq 0)$.
\end{theorem}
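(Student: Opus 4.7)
The plan is to deduce joint continuity from the moment estimates of Lemmas \ref{moments}, \ref{moments-difference} and \ref{moments-2} by a standard multiparameter chaining argument in the spirit of Ehm \cite{ehm}. Fix $T>0$ and a cube $I=[-M,M]^d$; it suffices to build a continuous modification of $L$ on $I\times[0,T]$ and then let $M,T\to\infty$ along a countable sequence.

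For $(x,t),(x',t')\in I\times[0,T]$ with, say, $t'\le t$, the additivity of the occupation measure gives
\begin{equation*}
L(x,t)-L(x',t') \,=\, L(x,[t',t])\,+\,\bigl[L(x,t')-L(x',t')\bigr].
\end{equation*}
For an even integer $n\ge 2$ and $0<\gamma<\tfrac{1}{2}\min\{\alpha/(Hd)-1,\,1-H\}$, the first summand is controlled by Lemma \ref{moments} (or Lemma \ref{moments-2} if $Y$ is a subordinator) applied to the interval $[t',t]$, yielding
\begin{equation*}
\E\bigl|L(x,[t',t])\bigr|^{n} \,\le\, K^{n}\,|t-t'|^{n(1-dH/\alpha)}\,(n!)^{A},
\end{equation*}
while the second summand is controlled by Lemma \ref{moments-difference} (or Lemma \ref{moments-2}) on $B=[0,t']\subseteq[0,T]$, giving
\begin{equation*}
\E\bigl|L(x,t')-L(x',t')\bigr|^{n} \,\le\, K^{n}\,|x-x'|^{n\gamma}\,(n!)^{B}.
\end{equation*}
Combining both with $(a+b)^{n}\le 2^{n-1}(a^{n}+b^{n})$ produces a unified bound
\begin{equation*}
\E\bigl|L(x,t)-L(x',t')\bigr|^{n} \,\le\, C^{n}\bigl(|x-x'|^{n\gamma}+|t-t'|^{n(1-dH/\alpha)}\bigr)(n!)^{C'},
\end{equation*}
valid on $I\times[0,T]$.

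With this estimate in hand, I would run the dyadic chaining argument of Ehm: partition $I\times[0,T]$ into nested grids $\mathcal{G}_k$ of mesh $2^{-k}$, use Markov's inequality and the displayed bound to control the $L^{n}$-norm of increments between nearest neighbours in $\mathcal{G}_{k}$, and invoke Borel--Cantelli along $k\to\infty$. Since $d<\alpha/H$ and $\gamma>0$, both exponents $n\gamma$ and $n(1-dH/\alpha)$ are positive, so choosing $n$ large enough that $n\gamma>d+1$ and $n(1-dH/\alpha)>d+1$ makes the error probabilities summable; the factor $(n!)^{C'}$ is harmless once $n$ is fixed. This yields an almost surely uniformly continuous modification of $L(x,t)$ on $I\times[0,T]$. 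Taking a countable union of null sets as $M,T\to\infty$ extends this to all of $\R^d\times\R_{+}$. The main obstacle is handling the two different scaling exponents in $x$ and $t$ simultaneously, but the additive split above reduces the problem to two single-parameter estimates that fit directly into the Kolmogorov--Ehm framework.
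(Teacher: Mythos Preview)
Your proposal is correct and follows essentially the same route as the paper: the authors' proof consists of a single sentence invoking Lemmas \ref{moments}, \ref{moments-difference}, \ref{moments-2} and Kolmogorov's continuity theorem, and your additive split $L(x,t)-L(x',t')=L(x,[t',t])+[L(x,t')-L(x',t')]$ together with the dyadic chaining is precisely how that invocation is made rigorous. The only point worth flagging is that the lemmas are stated for $B=[0,h]$, so to bound $\E|L(x,[t',t])|^{n}$ you tacitly use the stationarity of increments of $X$ (as the paper does when deriving (\ref{moment-11})--(\ref{moment-21})) together with the fact that the constants in the lemmas do not depend on $x$; this is routine but should be mentioned.
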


\begin{proof}\
The proof follows from Lemmas \ref{moments}, \ref{moments-difference},
\ref{moments-2} and Kolmogorov's continuity theorem.
\end{proof}

\begin{remark}
For $d=1$, $H=1/2$ and $\alpha >1$ this result was proved by Nane
\cite{nane-alpha}. Theorem \ref{main-thm-1} implies that for $d=1$,  $H=1/2$
and $\alpha >1/2$,  almost surely $X(t)$ $(t\geq 0)$ has a jointly
continuous local time $L(x,\,t)$ $(x\in \RR{R}^{d}, \, t\geq 0)$. Hence
Theorem \ref{main-thm-1} is an improvement of the results in
\cite{nane-alpha} and an extension of results in \cite{xiao}
obtained for multidimensional iterated Brownian motion.
\end{remark}

The following tail probability estimates are used in deriving the sharp H\"older
conditions in the set variable of the local times of $\alpha$-time fractional
Brownian motion.

\begin{lemma}\label{tail-prob}
Suppose $Y$ is not a subordinator. For  any $\lambda>0$,
there exists a finite constant $A>0$, depending on $\lambda$,
$d$, $H$ and $\alpha$ only, such that for all $\tau\geq 0$,
$h>0$, $B=[\tau ,\tau +h]$, $x,w\in
\RR{R}^{d}$, all
$0<\gamma<\frac{1}{2}\min \{\alpha/(Hd)-1,\,1 -H\}$, and all $u>0$
\begin{eqnarray}
\P\Big\{L(x+X(\tau),B)\geq Ah^{1-dH/\alpha}u^{dH(1+1/\alpha)}\Big\}
&\leq &\exp(-\lambda u),\label{exponential-1}\\
\P\Big\{|L(x+w+X(\tau),B)-L(x+X(\tau),B)|
&\geq &A|w|^{\gamma}h^{1-(d+\gamma)H/\alpha}u^{C(H,\alpha)}\Big\}\nonumber\\
& \leq & \exp(-\lambda u),\label{exponential-2}
\end{eqnarray}
where $C(H,\alpha)=d+\gamma+\frac{H(d+2\gamma)}{\alpha}$.
\end{lemma}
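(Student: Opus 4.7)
The plan is to derive both exponential tail bounds from the moment
estimates of Lemmas \ref{moments} and \ref{moments-difference} via
Markov's inequality, optimizing the integer moment $n$ as a function
of the parameter $u$.

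First I would verify that Lemmas \ref{moments} and
\ref{moments-difference} continue to hold for the shifted local time
$L(x+X(\tau),B)$ with $B=[\tau,\tau+h]$. Using the Fourier representation
\[
L(x+X(\tau),B)=(2\pi)^{-d}\int_B ds\int_{\RR{R}^d}du\,
\exp\bigl(i\l u,X(s)-X(\tau)-x\r\bigr),
\]
the $n$-th moment, after conditioning on the paths of $Y_1,\ldots,Y_d$,
involves Gaussian characteristic functions of
$\sum_j u_j^\ell\bigl(W_\ell(Y_\ell(t_j))-W_\ell(Y_\ell(\tau))\bigr)$.
Including $y_{\ell,0}:=Y_\ell(\tau)$ among the points at which each
$W_\ell$ is conditioned preserves the SLND lower bound
\eqref{nondetermin}, read as
$\ge K\min_{0\le i\le n,\,i\ne j}|y_{\ell,j}-y_{\ell,i}|^{2H}$.
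Because $Y$ has stationary independent increments, the joint density of
$(Y_\ell(\tau),Y_\ell(t_1),\ldots,Y_\ell(t_n))$ factorizes as
$p_\tau(y_{\ell,0})\prod_{j=1}^n p_{t_j-t_{j-1}}(y_{\ell,j}-y_{\ell,j-1})$
with $t_0=\tau$, and the factor $p_\tau(y_{\ell,0})$ integrates to one.
Thus the proofs of Lemmas \ref{moments} and \ref{moments-difference} go
through essentially verbatim, yielding
$\E[L(x+X(\tau),B)]^n\le K^n h^{(1-dH/\alpha)n}(n!)^{dH(1+1/\alpha)}$ and
$\E[L(x+w+X(\tau),B)-L(x+X(\tau),B)]^n\le
K^n|w|^{n\gamma}h^{n(1-(d+\gamma)H/\alpha)}(n!)^{C(H,\alpha)}$.

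Granted these moment bounds, both tail estimates follow by a standard
Markov/Stirling optimization. For \eqref{exponential-1}, set
$a=dH(1+1/\alpha)$, $v=Ah^{1-dH/\alpha}u^{a}$ and $M=Kh^{1-dH/\alpha}/v$,
so that Markov gives
\[
\P\bigl\{L(x+X(\tau),B)\ge v\bigr\}\le M^n(n!)^a
\]
for every integer $n\ge 1$. Stirling's formula gives
$(n!)^a\le K_1^n n^{an}e^{-an}$, and choosing
$n=\lfloor (A/K)^{1/a}u\rfloor$ (with trivial adjustments for small $u$)
reduces the right-hand side to
$\exp\bigl(-a(A/K)^{1/a}u\bigr)$ up to a multiplicative constant.
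Picking $A=A(\lambda,d,H,\alpha)$ large enough that
$a(A/K)^{1/a}\ge\lambda$ (with a further enlargement to absorb the
constant from Stirling and the floor function) yields the desired
bound. The proof of \eqref{exponential-2} is identical, with $a$
replaced by $C(H,\alpha)$, $v$ replaced by
$A|w|^\gamma h^{1-(d+\gamma)H/\alpha}u^{C(H,\alpha)}$, and the
increment moment estimate used in place of the moment estimate.

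The principal step requiring care is the first, transferring the
moment bounds to the shifted local time $L(x+X(\tau),B)$: because
$X(\tau)$ is correlated with the values of $X$ on $[\tau,\tau+h]$,
one cannot simply invoke a stationarity argument for $X$ itself and
must instead revisit the proofs of
Lemmas \ref{moments} and \ref{moments-difference} with
$y_{\ell,0}=Y_\ell(\tau)$ inserted as an additional conditioning point.
Once this bookkeeping (particularly within the permutation/
$\delta_{\ell,j}$ combinatorics of Lemma \ref{moments-difference}) is
checked, the remaining Markov/Stirling step is routine and the only
genuine probabilistic input is the SLND property of fractional Brownian
motion already used throughout Section~\ref{sec:continuity}.
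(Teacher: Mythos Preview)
Your proposal is correct and the Markov/Stirling optimization in the second
half is exactly what the paper means by ``a direct consequence of
(\ref{moment-11}), (\ref{moment-21}) and Chebyshev's inequality.''

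The one place where you differ from the paper is the first step, and in fact
the paper's route is shorter than yours.  You write that ``one cannot simply
invoke a stationarity argument for $X$ itself'' because $X(\tau)$ is
correlated with $X$ on $[\tau,\tau+h]$, and therefore propose to rerun the
proofs of Lemmas~\ref{moments} and \ref{moments-difference} with the extra
conditioning point $y_{\ell,0}=Y_\ell(\tau)$.  That would work, but it is not
necessary.  The paper observes instead that $L(x+X(\tau),[\tau,\tau+h])$ is
precisely the local time at the deterministic point $x$ of the increment
process $s\mapsto X(\tau+s)-X(\tau)$ over $[0,h]$; since $X$ has stationary
increments, this process has the same law as $\{X(s):s\in[0,h]\}$, and hence
$L(x+X(\tau),[\tau,\tau+h])\stackrel{d}{=}L(x,[0,h])$ (and likewise for the
increments in $x$).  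The moment bounds (\ref{moment-11}) and
(\ref{moment-21}) then follow immediately from Lemmas~\ref{moments} and
\ref{moments-difference} with no need to reopen the SLND bookkeeping or the
$\delta_{\ell,j}$ combinatorics.  So the correlation you worry about is
exactly cancelled by the shift, and the stationarity-of-increments argument
is both available and cleaner than the direct recomputation you outline.
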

\begin{proof}\ Since $X=\{X(t), t \ge 0\}$ has stationary increments,
i.e., for any $\tau\ge 0$, the processes $\{X(t+\tau) - X(\tau),
t \ge 0\}$ and $X$ have the same finite dimensional distributions.
Hence Lemmas \ref{moments} and \ref{moments-difference} can be reformulated as follows:
For any $\tau\geq 0$,
$h>0$, $B=[\tau ,\tau +h]$, $x,w\in
\RR{R}^{d}$, any even integer $n\geq 2$ and any
$0<\gamma<\frac{1}{2}\min \{\alpha/(Hd)-1,\,1 -H\}$, we have
\begin{eqnarray}
\E[L(x+X(\tau),B)]^{n}&\leq
&K^{n}h^{(1-dH/\alpha)n}(n!)^{dH(1+1/\alpha)},\label{moment-11}\\
\E[L(x+w+X(\tau),B)-L(x+X(\tau),B)]^{n}
&\leq&K^{n}|w|^{n\gamma}h^{n(1-(d+\gamma)H/\alpha)}\nonumber\\
&& \times  (n!)^{d+\gamma+\frac{H(d+2\gamma)}{\alpha}},\label{moment-21}
\end{eqnarray}
where  $K>0$ is a finite constant depending
on $d$, $H$, $\ga$ and $\alpha$ only.

Now, Lemma \ref{tail-prob} is a direct consequence of (\ref{moment-11}),
(\ref{moment-21}) and the Chebyshev's inequality.
\end{proof}

\begin{lemma}\label{tail-prob-2}
Suppose $Y$ is a stable subordinator of index $\alpha<1$. For any $\lambda>0$,
there exists a finite constant $A>0$, depending on $\lambda$, $d$, $H$
and $\alpha$ only, such that for all
$\tau\geq 0$, $h>0$, $B=[\tau ,\tau +h]$, $x,w\in \RR{R}^{d}$,
all
$0<\gamma<\frac{1}{2}\min \{\alpha/(Hd)-1,\,1 -H\}$, and $u>0$
\begin{eqnarray}
\P\Big\{L(x+X(\tau),B)\geq Ah^{1-dH/\alpha}u^{dH/\alpha}\Big\}
&\leq &\exp(-\lambda u)\label{exponential-2-1},\\
\P\Big\{|L(x+w+X(\tau),B)-L(x+X(\tau),B)|
&\geq &A|w|^{\gamma}h^{1-(d+\gamma)H/\alpha}u^{D(H,\alpha)}\Big\}\nonumber\\
& \leq & \exp(-\lambda u),\label{exponential-2-2}
\end{eqnarray}
where $D(H,\alpha)=\gamma+\frac{H(d+2\gamma)}{\alpha}$.
\end{lemma}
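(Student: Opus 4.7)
The plan is to mirror the proof of Lemma \ref{tail-prob}, replacing the moment bounds from Lemmas \ref{moments}--\ref{moments-difference} by their subordinator counterparts in Lemma \ref{moments-2}. Since $X$ has stationary increments, the shifted process $\{X(t+\tau)-X(\tau),\,t\ge 0\}$ has the same finite-dimensional distributions as $X$; hence Lemma \ref{moments-2} applied on $[0,h]$ immediately yields, for $B = [\tau,\tau+h]$ and any admissible $\gamma$,
\begin{align*}
\E\bigl[L(x+X(\tau),B)\bigr]^{n} &\le K^{n} h^{(1-dH/\alpha)n} (n!)^{dH/\alpha},\\
\E\bigl[L(x+w+X(\tau),B)-L(x+X(\tau),B)\bigr]^{n} &\le K^{n} |w|^{n\gamma} h^{n(1-(d+\gamma)H/\alpha)}(n!)^{D(H,\alpha)},
\end{align*}
valid for every even integer $n \ge 2$, where $D(H,\alpha) = \gamma + H(d+2\gamma)/\alpha$.

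With these in hand, Chebyshev's inequality does the rest. For (\ref{exponential-2-1}), the first bound gives
\[
\P\bigl\{L(x+X(\tau),B)\ge A h^{1-dH/\alpha} u^{dH/\alpha}\bigr\} \le \left(\frac{K}{A}\right)^{n} \frac{(n!)^{dH/\alpha}}{u^{n\,dH/\alpha}},
\]
and Stirling's formula bounds $(n!)^{dH/\alpha}\le K^{n} n^{n\,dH/\alpha}$. Choosing $n$ to be the smallest even integer larger than $c u$ for an appropriate constant $c$ (so that $n/u$ is bounded) reduces the right side to $(K_1/A)^{n}$, and picking $A = A(\lambda)$ large enough forces this below $\exp(-\lambda u)$. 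The tail bound (\ref{exponential-2-2}) follows identically from the second moment estimate, with the exponent $D(H,\alpha)$ of $u$ emerging from the $(n!)^{D(H,\alpha)}$ factor by the same $n \asymp u$ mechanism.

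There is no real obstacle here: once Lemma \ref{moments-2} is granted, this is a pure Chebyshev/Stirling computation. The only bookkeeping point worth flagging is that Lemma \ref{moments-2} supplies bounds only for even $n$, which forces us to round $cu$ up to an even integer; this costs nothing since the asserted inequality is trivial for small $u$ (where $\exp(-\lambda u)\approx 1$), and for $u\ge 1$ the rounding perturbs $n/u$ only by an additive constant absorbed into $c$.
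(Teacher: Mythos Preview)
Your proposal is correct and follows exactly the route the paper indicates: transfer the moment bounds of Lemma~\ref{moments-2} to $B=[\tau,\tau+h]$ via stationarity of increments, then apply Chebyshev (with Stirling and $n\asymp u$), precisely as in the proof of Lemma~\ref{tail-prob}. The paper in fact omits the details and simply refers back to that argument, so your write-up is, if anything, more explicit than the original.
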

The proof of Lemma \ref{tail-prob-2} follows the same idea as that in the proof of
Lemma \ref{tail-prob}, with an application of Lemma \ref{moments-2}. We omit it here.

The next lemma shows that process the real-valued process $Z(t)=W(Y(t))$ has
heavy tails as in the case of $Y$. This might make this process more desirable,
since it has heavy tails without independence of increments and with the
stationarity of the increments. We need the following lemma to prove Lemma
\ref{cont-prob} for a two sided estimate of $\P\Big\{ \sup_{0\leq t\leq 1}
|Z(t)|> u \Big\},$ which will be useful in proving Theorem \ref{main-thm-2}.

\begin{lemma}\label{asymptotics-x}
Let $d=1$, $0<H<1$ and $0< \alpha \leq 2$, and  let $0\leq a\leq b$ then
$$
\lim_{u\to\infty}\frac{\P\Big\{|Z(b)-Z(a)|>u\Big\}}{u^{-\alpha/H}}= C(b-a)
$$
for some finite constant $C>0$.
\end{lemma}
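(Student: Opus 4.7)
The plan is to reduce the tail computation to a product of two independent random variables by exploiting the subordination structure of $Z$. Using the stationary-increment property of $Z$ established earlier, I would first replace $Z(b)-Z(a)$ by $Z(h)=W(Y(h))$ where $h=b-a$. Since $W$ and $Y$ are independent and, for each fixed $y$, $W(y)\sim N(0,|y|^{2H})$, conditioning on $Y(h)$ yields the distributional identity
$$
|W(Y(h))| \stackrel{d}{=} |Y(h)|^{H}\,|N|,
$$
where $N$ is a standard normal random variable independent of $Y(h)$. Combining this with the $(1/\alpha)$-self-similarity $Y(h)\stackrel{d}{=}h^{1/\alpha}Y(1)$ reduces the problem to showing
$$
u^{\alpha/H}\,\P\{|Y(1)|^{H}|N|>u\,h^{-H/\alpha}\}\ \longrightarrow\ C\,h\qquad (u\to\infty).
$$

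Next I would invoke the classical tail asymptotics for strictly $\alpha$-stable laws (with $\alpha<2$): $\P\{|Y(1)|>t\}\sim c_\alpha\,t^{-\alpha}$ as $t\to\infty$ for an explicit constant $c_\alpha>0$ (depending only on the stability index, scale and skewness of $Y$); raising to the power $1/H$ gives $\P\{|Y(1)|^{H}>t\}\sim c_\alpha\,t^{-\alpha/H}$. The tail of the product $|Y(1)|^{H}|N|$ then follows from Breiman's lemma: because $|N|$ has moments of all orders, in particular $\E|N|^{\alpha/H+\delta}<\infty$ for some $\delta>0$, one has
$$
\P\{|Y(1)|^{H}|N|>v\}\ \sim\ c_\alpha\,\E\bigl[|N|^{\alpha/H}\bigr]\,v^{-\alpha/H}\qquad (v\to\infty).
$$
Setting $v=u\,h^{-H/\alpha}$ and multiplying by $u^{\alpha/H}$ produces the stated limit with $C=c_\alpha\,\E[|N|^{\alpha/H}]\in(0,\infty)$.

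I do not expect a genuine technical obstacle in the main range $0<\alpha<2$: the conditional Gaussian structure makes the joint law of $W$ and $Y$ transparent, and Breiman's lemma is essentially dominated convergence once the uniform integrability supplied by $\E|N|^{\alpha/H+\delta}<\infty$ is available. The only delicate point is the endpoint $\alpha=2$, where $Y$ is Brownian motion and $|Z(h)|$ has super-polynomial tails; then the limit is zero and the statement must be read with the convention $C=0$ (or the lemma tacitly restricted to $\alpha<2$, consistent with the ``heavy tail'' motivation stated just before the lemma).
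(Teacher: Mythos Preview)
Your proposal is correct and follows essentially the same route as the paper: reduce to the product $|Y(1)|^{H}|N|$ via stationarity and self-similarity, then use the stable tail asymptotics $\P\{|Y(1)|>t\}\sim c_\alpha t^{-\alpha}$ together with the finiteness of all Gaussian moments. The only cosmetic difference is that the paper conditions on the Gaussian factor and applies dominated convergence directly (arriving at $C=c_\alpha\cdot 2^{\alpha/2H}\pi^{-1/2}\Gamma((\alpha+H)/2H)$, which is exactly your $c_\alpha\,\E|N|^{\alpha/H}$), whereas you invoke Breiman's lemma as a black box; these are the same argument. Your remark on the endpoint $\alpha=2$ is apt---the paper's own proof relies on the polynomial stable tail and tacitly needs $\alpha<2$ as well.
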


\begin{proof}\,
By using the stationarity of the increments and the self-similarity of
$W$ and $Y$ we  get
\begin{equation}\label{ldp-prob}\begin{split}
\P\Big\{|Z(b)-Z(a)|>u\Big\}&= \P\Big\{|W(Y(b-a))|>u\Big\}\\
&=\P\Big\{(b-a)^{H/\alpha}|Y(1)|^{H}|W(1)|>u\Big\}\\
&=\int_{-\infty}^\infty \P\Big\{(b-a)^{H/\alpha}|Y(1)|^{H}|s|>u\Big\}f^{H}(s)ds\\
&=\int_{-\infty}^\infty \P\Big\{|Y(1)|>u^{1/H}(b-a)^{-1/\alpha}
|s|^{-1/H}\Big\}f^{H}(s)ds,
\end{split}
\end{equation}
here $f^H(s)=\frac{e^{-s^2/2}}{\sqrt{2\pi}}$ is the density of $W(1)$.

The following is a well-known result
$$
\lim_{u\to\infty}\frac{\P\Big\{|Y(1)|>u\Big\}}{u^{-\alpha}}= k
$$
for some $k>0$; see, for example, Bertoin \cite{bertoin}.
Hence, for fixed $a\leq b$, $s\in \RR{R}$, and as $x\to\infty$
\begin{equation}
\begin{split}
P\Big\{|Y(1)|>x^{1/H}(b-a)^{-1/\alpha}|s|^{-1/H}\Big\}&\sim
k (x^{1/H}(b-a)^{-1/\alpha}|s|^{-1/H})^{-\alpha}\\
&=k x^{-\alpha/H}(b-a)|s|^{\alpha/H}.
\end{split}
\end{equation}

Now we apply the Dominated Convergence Theorem in equation
\eqref{ldp-prob} to get
\begin{equation}\begin{split}
\lim_{x\to\infty}\frac{\P\Big\{|Z(b)-Z(a)|>x\Big\}}{x^{-\alpha/H}}&=
k(b-a)\int_{-\infty}^\infty|s|^{\alpha/H}f^H(s)ds\\
&=k(b-a)2^{\alpha/2H}\pi^{-1/2}\Gamma((\alpha+H)/2H).
\end{split}\end{equation}
Hence the constant in the theorem is $C=k2^{\alpha/2H}\pi^{-1/2}\Gamma((\alpha+H)/2H)$.
\end{proof}

\begin{lemma}\label{cont-prob}
Let $d=1$, $0<H<1$ and $0< \alpha \leq 2$. There exists a
finite constant $K>0$ such that for $u \ge1$,
\begin{equation}\label{Eq:Tail}
K^{-1} u^{-\alpha/H}\leq \P\Big\{ \sup_{0\leq t\leq 1} |Z(t)|> u \Big\}
\leq K u^{-\alpha/H}.
\end{equation}
\end{lemma}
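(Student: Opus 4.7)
The plan is to prove the lower bound directly from Lemma \ref{asymptotics-x} and to control the supremum from above by decoupling the time change from the fractional Brownian motion via conditioning.

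For the lower bound, I would simply use the pointwise inequality $\sup_{0\leq t\leq 1}|Z(t)| \ge |Z(1)|$ and apply Lemma \ref{asymptotics-x} with $a=0$, $b=1$, which gives $\P\{|Z(1)|>u\} \sim C u^{-\alpha/H}$ as $u\to\infty$. This yields $\P\{\sup_{0\leq t\leq 1}|Z(t)|>u\}\ge K^{-1} u^{-\alpha/H}$ for all sufficiently large $u$, and the bound extends to all $u\ge 1$ by absorbing the bounded range $[1,u_0]$ into the constant (using that the probability is strictly positive there).

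For the upper bound, set $M_Y := \sup_{0\le t\le 1}|Y(t)|$. Since $\{Y(t):0\le t\le 1\}\subseteq[-M_Y,M_Y]$, we have
$$\sup_{0\le t\le 1}|Z(t)|\ \le\ \sup_{|s|\le M_Y}|W(s)|.$$
Using the independence of $W$ and $Y$ together with the $H$-self-similarity of $W$, conditional on $\sigma(Y)$ the right-hand side is equal in law to $M_Y^H\,\xi$, where $\xi := \sup_{|s|\le 1}|W(s)|$ is independent of $Y$. Therefore
$$\P\Big\{\sup_{0\le t\le 1}|Z(t)|>u\Big\}\ \le\ \P\{M_Y^H\xi>u\}\ =\ \E\big[\P\{M_Y>(u/\xi)^{1/\alpha H}\mid \xi\}\big].$$

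Next I would invoke the standard tail estimate $\P\{M_Y>x\}\le K x^{-\alpha}$ for the running maximum of the strictly $\alpha$-stable L\'evy process, valid for $x>0$. (When $Y$ is a subordinator this is immediate since $M_Y=Y(1)$ whose tail asymptotics are classical; in the Type A case it follows from the self-similarity of $Y$ combined with the known tail asymptotics of $|Y(1)|$, or from a maximal-inequality argument.) Applying this bound inside the expectation yields
$$\P\Big\{\sup_{0\le t\le 1}|Z(t)|>u\Big\}\ \le\ K u^{-\alpha/H}\,\E[\xi^{\alpha/H}].$$
By Fernique's theorem (or the Borell--TIS inequality) $\xi$ has Gaussian-type tails, so $\E[\xi^{\alpha/H}]<\infty$ for every $H>0$, which completes the upper bound.

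The main obstacle is verifying the polynomial tail $\P\{M_Y>x\}\le Kx^{-\alpha}$ uniformly across both the Type A case and the subordinator case; this is the only place where the nature of $Y$ enters, and once it is in hand the rest of the argument is a clean conditioning plus moment bound. Everything else (the reduction via $M_Y$, the self-similarity trick, the Gaussian moment bound on $\xi$) is essentially routine.
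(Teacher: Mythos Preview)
Your proposal is correct and follows essentially the same route as the paper: both reduce $\sup_t|Z(t)|$ to $\sup_{|s|\le M_Y}|W(s)|$, decouple via self-similarity of $W$ into a product $M_Y^H\,\xi$, and then combine the Gaussian tail of $\xi=\sup_{|s|\le 1}|W(s)|$ with the polynomial tail $\P\{M_Y>x\}\le K(1\wedge x^{-\alpha})$; the lower bound via $|Z(1)|$ and Lemma~\ref{asymptotics-x} is identical. The only difference is the order of integration: the paper conditions on $Y$, inserts the Gaussian tail bound $\P\{\xi>v\}\le K e^{-v^2/(2+\varepsilon)}$, and then unwinds $\E[g(M_Y)]$ via $g(x)=\int_0^x g'$ and Fubini, whereas you condition on $\xi$ and apply the stable tail bound directly to obtain $K u^{-\alpha/H}\E[\xi^{\alpha/H}]$. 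Your ordering is a bit more economical since it replaces the integration-by-parts computation by a single moment bound (finite by Fernique/Borell--TIS); the paper's ordering has the minor advantage that only the exponential tail of $\xi$ is used qualitatively rather than a specific moment. One typo to fix: the threshold should be $(u/\xi)^{1/H}$, not $(u/\xi)^{1/(\alpha H)}$, though your subsequent line $K u^{-\alpha/H}\E[\xi^{\alpha/H}]$ is correct.
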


\begin{proof}\ Let $S(t)\equiv \sup_{0\leq s \leq t} |Y(s)|$.
Then, by using the scaling property of $W$ and conditioning, we have
\begin{equation}\label{Eq:Tail2}
\begin{split}
\P\bigg\{ \sup_{0\leq t\leq 1} |Z(t)|> u \bigg\}
&\leq \P\bigg\{ \sup_{|x|\leq S(1)} |W(x)|> u \bigg\}\\
&= \E\Bigg(\P\bigg\{\sup_{|x|\leq 1} |W(x)|> \frac{u}
{S(1)^H} \Big| Y\bigg\}\Bigg).
\end{split}
\end{equation}
It is well known that, for any $\ep > 0$, there exists
a finite constant $K$ such that for all $u>0$
\begin{equation}\label{Eq:maxFBm}
\P\bigg\{\sup_{|x| \leq 1}|W(x)|>u\bigg\}
\leq K\, \exp\Big(-\frac{u^{2}} {2+\ep}\Big).
\end{equation}
See, for example, Lifshits \cite[Section 14]{lifshits}.
Consequently
\begin{equation}\label{Eq:Tail3}
\P\Big\{\sup_{0\leq t\leq 1} |Z(t)|> u \Big\}
\leq K\, \E \exp\bigg(-\frac{u^{2}} {(2+\ep) S(1)^{2H}}\bigg).
\end{equation}
Since, for all $x > 0$, the function $g(x) =
\exp\big(-\frac{u^{2}}{(2+\ep)x^{2H}}\big)$
has positive derivative
$$
g'(x) = \frac{2H\, u^2}{2+\ep}\,
\exp\bigg(-\frac{u^{2}}{(2+\ep)x^{2H}}\bigg) \frac1 {x^{2H+1}},$$
we derive
\begin{equation}\label{Eq:Tail4}
\begin{split}
&\P\bigg\{\sup_{0\leq t\leq 1} |Z(t)|> u \bigg\}\\
&\leq K u^2\,\int_0^\infty  \exp\bigg(-\frac{u^{2}}{(2+\ep)x^{2H}}\bigg)
\frac1 {x^{2H+1}} \P\Big\{S(1) > x\Big\}\, dx\\
&= K\, \int_0^\infty \exp\bigg(-\frac{1}{(2+\ep)y^{2H}}\bigg)
\frac1 {y^{2H+1}} \P\Big\{S(1) > u^{1/H}y\Big\}\, dy,
\end{split}
\end{equation}
where the last inequality follows from the change of variable
$x= u^{1/H}y$. Now by using the well-known estimate
\[
\P\big\{S(1) >  y\big\} \le K \,\big(1 \wedge y^{-\alpha}\big),
\qquad \forall \ y > 0,
\]
we obtain that for all $u > 1$,
\[
\int_0^\infty \exp\bigg(-\frac{1}{(2+\ep)y^{2H}}\bigg)
\frac1 {y^{2H+1}} \P\Big\{S(1) > u^{1/H}y\Big\}\, dy
\le K \, u^{- \alpha/H},
\]
where $K > 0$ is a finite constant. This and (\ref{Eq:Tail4})
together give the upper bound in (\ref{Eq:Tail}).

The lower bound in \eqref{Eq:Tail} follows from Lemma
\ref{asymptotics-x} and the fact that
\begin{equation}\label{upper-lower-bound}
\P\big\{|Z(1)|> u \big\}=
\sup_{0\leq t\leq 1}\P\big\{|Z(t)|> u \big\}
\leq \P\bigg\{ \sup_{0\leq t\leq 1} |Z(t)|> u \bigg\}.
\end{equation}
The first equality in Equation \eqref{upper-lower-bound}
follows from the fact that the function
$$t\to\P\big\{|Z(t)|> u \big\}=\P\Big\{|Z(1)|> t^{-H/\alpha}u \Big\}
$$ is an increasing function for $t\in (0,1]$.
\end{proof}

The following theorems are for laws of the iterated logarithm
for the maximum local time $L^*([\tau, \tau +h]) =
\sup_{x\in \RR{R}^{d}}L(x,[\tau, \tau +h])$ and uniform H\"older conditions of local
times of $\alpha$-time fractional Brownian motions.

\begin{theorem}\label{main-thm-2}
Let $d<\alpha/H$ and suppose $Y$ is not a subordinator.
\begin{description}
\item (1). There exists a finite
constant $K>0$ such that for any $\tau\geq 0$ with probability $1$
\begin{equation}\label{holder-1}
\limsup_{h\to 0}\sup_{x\in \RR{R}^{d}}\frac{L(x,\tau
+h)-L(x,\tau)}{h^{1-dH/\alpha}(\log \log
h^{-1})^{dH(1+1/\alpha)}}\leq K.
\end{equation}
\item (2). For any $T>0$, there exists a positive constant $K$
such that almost surely
\begin{equation}\label{holder-2}
\limsup_{h\to 0}\sup_{0\leq t\leq T}\sup_{x\in
\RR{R}^{d}}\frac{L(x,t+h)-L(x,t)}{h^{1-dH/\alpha}(\log 1/h)^{
dH(1+1/\alpha ) }}\leq K.
\end{equation}
\end{description}
\end{theorem}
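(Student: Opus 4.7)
The plan is to reduce both parts to the exponential tail estimates of Lemma~\ref{tail-prob} by a standard Borel--Cantelli argument along a geometric sequence of time scales, once the pointwise-in-$x$ bounds there are upgraded to supremum-in-$x$ bounds.

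The first step is to prove a supremum version of \eqref{exponential-1}: for any $\lambda>0$ there is $A>0$ such that, for all $\tau\ge 0$, $h>0$ and $u\ge 1$,
\begin{equation*}
\P\Big\{\sup_{x\in\R^d} L\bigl(x+X(\tau),\,[\tau,\tau+h]\bigr) \ge A\, h^{1-dH/\alpha}\, u^{dH(1+1/\alpha)}\Big\}\le e^{-\lambda u}.
\end{equation*}
This is obtained by a dyadic chaining over $\R^d$ à la Ehm~\cite{ehm} and Xiao~\cite{xiao}, driven by the increment bound \eqref{exponential-2}: one fixes a $\gamma$ in the admissible range and bounds the oscillation of $L(\cdot,[\tau,\tau+h])$ across a cube of side $2^{-k}$ via \eqref{exponential-2} and the metric entropy of the cube. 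Because $L(\cdot,B)$ is supported on the closure of $X(B)-X(\tau)$, the chaining only needs to run over a random ball of radius $R:=\sup_{s\in[\tau,\tau+h]}|X(s)-X(\tau)|$. By stationary increments and self-similarity, $R$ has the law of $h^{H/\alpha}\sup_{s\in[0,1]}|X(s)|$, so applying Lemma~\ref{cont-prob} coordinatewise shows that, on the event $\{R\le h^{H/\alpha}u^\beta\}$ (whose complement has probability $\le Ku^{-\alpha\beta/H}\le e^{-\lambda u}$ for a suitably large $\beta$), the chaining closes and gives the claimed bound.

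For part~(1), fix $\tau\ge 0$ and take $h_n=2^{-n}$, $u_n=c\log n$ with $c$ large enough that $\lambda c>1$. Since $\log\log h_n^{-1}\asymp \log n$, the supremum tail bound yields
\begin{equation*}
\P\Big\{\sup_{x}\bigl[L(x,\tau+h_n)-L(x,\tau)\bigr] > A'\, h_n^{1-dH/\alpha}(\log\log h_n^{-1})^{dH(1+1/\alpha)}\Big\}\le n^{-\lambda c},
\end{equation*}
whose sum over $n$ converges. Borel--Cantelli then shows that only finitely many of these events occur a.s. For a generic $h\in(h_{n+1},h_n]$, the monotonicity of $h\mapsto L(x,\tau+h)-L(x,\tau)$ sandwiches the increment between the dyadic values at $h_{n+1}$ and $h_n$, and the bounded ratio $h_n/h_{n+1}=2$ is absorbed into the constant, yielding \eqref{holder-1}.

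For part~(2), apply the same chain bound to each of the intervals $[kh_n,(k+1)h_n]$ with $0\le k\le Th_n^{-1}$ and choose $u_n=c\log h_n^{-1}$. A union bound gives total probability
\begin{equation*}
K\,T\,h_n^{-1}\,\exp\bigl(-\lambda c\log h_n^{-1}\bigr)=K\,T\,h_n^{\lambda c-1},
\end{equation*}
which is summable in $n$ for $c$ large enough. Borel--Cantelli plus monotonicity in the interval variable to pass from dyadic intervals and scales to arbitrary ones then give \eqref{holder-2}.

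The main obstacle is the first step: upgrading the pointwise-in-$x$ tail bound to a supremum bound requires both the chaining argument and the auxiliary control on the random support of $L(\cdot,B)$. Once Lemma~\ref{cont-prob} handles the size of that support, the remainder of the proof is a routine Borel--Cantelli exercise whose only subtlety is matching the powers of the logarithm in \eqref{holder-1} and \eqref{holder-2} to the exponent $dH(1+1/\alpha)$ from \eqref{exponential-1}.
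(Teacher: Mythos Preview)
Your overall strategy—chaining in the space variable driven by Lemma~\ref{tail-prob}, with Lemma~\ref{cont-prob} controlling the random range $R=\sup_{s\in B}|X(s)-X(\tau)|$, followed by Borel--Cantelli along a geometric sequence—coincides with the paper's proof, which simply defers to the chaining arguments of Xiao~\cite{xiao} and Ehm~\cite{ehm}.

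There is, however, one genuine slip. You assert that $\P\{R>h^{H/\alpha}u^{\beta}\}\le Ku^{-\alpha\beta/H}\le e^{-\lambda u}$ ``for a suitably large $\beta$''; but no fixed $\beta$ makes a negative power of $u$ dominate $e^{-\lambda u}$ for all $u\ge1$. This is exactly where the present heavy-tailed setting differs from iterated Brownian motion in~\cite{xiao}, where the analogous range has sub-Gaussian tails and the step is immediate. The repair is routine. Either (i) replace $u^{\beta}$ by $e^{cu}$ with $c=\lambda H/\alpha$, so that Lemma~\ref{cont-prob} yields $\P\{R>h^{H/\alpha}e^{cu}\}\le Ke^{-\lambda u}$; the resulting $e^{cud}$ net points are harmless because the tails in~\eqref{exponential-1}--\eqref{exponential-2} are themselves exponential, and a starting mesh of width $h^{H/\alpha}e^{-c'u}$ makes the $|w|^{\gamma}$ factor kill the oscillation term so that the exponent $dH(1+1/\alpha)$ from~\eqref{exponential-1}, rather than $C(H,\alpha)$ from~\eqref{exponential-2}, survives. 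Or (ii) forgo the uniform-in-$u$ supremum bound and run the chaining directly along $h_n=2^{-n}$: bound $R_n\le h_n^{H/\alpha}n^{\beta}$ with $\beta>H/\alpha$ so that $\sum_n\P\{R_n>h_n^{H/\alpha}n^{\beta}\}<\infty$, and observe that the polynomial-in-$n$ net size is beaten by $e^{-\lambda u_n}=n^{-\lambda c}$ once $c$ is large. Either route restores~\eqref{holder-1}; the remainder of your Borel--Cantelli argument for both parts is correct as written.
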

\begin{proof}\ Eq. (\ref{holder-1}) follows from Lemma \ref{cont-prob} and a chaining argument as that in the proof
of Theorem 2 in \cite{xiao}. The proof of Eq. (\ref{holder-2}), using Lemma \ref{tail-prob}, is very similar to that of
Xiao \cite[Theorem 3]{xiao} and Ehm \cite[Theorem 2.1]{ehm}.
We omit the details.
\end{proof}

\begin{theorem}\label{main-thm-2-2}
Let $d<\alpha / H$ and suppose $Y$ is a stable subordinator of
index $\alpha<1$.
\begin{description}
\item (1). There exists a finite constant $K>0$ such
that for any $\tau\geq 0$ with probability $1$
\begin{equation}\label{holder-2-1}
\limsup_{h\to 0}\sup_{x\in \RR{R}^{d}}\frac{L(x,\tau
+h)-L(x,\tau)}{h^{1-dH/\alpha}(\log \log
h^{-1})^{dH/\alpha}}\leq K.
\end{equation}
\item (2). For any $T>0$,
there exists a finite
constant $K>0$ such that almost surely
\begin{equation}\label{holder-2-2}
\limsup_{h\to 0}\sup_{0\leq t\leq T}\sup_{x\in
\RR{R}^{d}}\frac{L(x,t +h)-L(x,t)}{h^{1-dH/\alpha}(\log
h^{-1})^{dH/\alpha}}\leq K.
\end{equation}
\end{description}
\end{theorem}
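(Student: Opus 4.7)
The plan is to mimic, almost verbatim, the proof of Theorem \ref{main-thm-2}, substituting the tail estimates of Lemma \ref{tail-prob-2} for those of Lemma \ref{tail-prob} throughout. The exponent on $n!$ in the moment bound drops from $dH(1+1/\alpha)$ in Lemma \ref{moments} to $dH/\alpha$ in Lemma \ref{moments-2} when $Y$ is a stable subordinator, and this exact exponent is inherited by the $\log\log h^{-1}$ factor in (\ref{holder-2-1}) and by the $\log h^{-1}$ factor in (\ref{holder-2-2}).

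For part (1), fix $\tau\ge 0$ and let $h_n = 2^{-n}$. Since $L(\cdot,[\tau,\tau+h_n])$ is supported on the range $X([\tau,\tau+h_n])$, self-similarity of $X$ together with Lemma \ref{cont-prob} shows that, outside a set of probability summable in $n$, this range sits in a ball $U_n$ of radius $r_n$ of order $h_n^{H/\alpha}(\log n)^{H/\alpha}$ around $X(\tau)$. I would then apply the pointwise tail bound (\ref{exponential-2-1}) with $u = c \log n$ at each node of a net in $U_n$ of mesh $\rho_n$, and use the H\"older-type estimate (\ref{exponential-2-2}) to chain between neighbouring nodes, with $\gamma$ fixed in the admissible range $0<\gamma<\frac12\min\{\alpha/(Hd)-1,1-H\}$. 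The mesh $\rho_n$ is chosen so that the chaining contribution matches the pointwise one, exactly as in Ehm \cite{ehm} and Xiao \cite{xiao}. A Borel--Cantelli argument with $\sum n^{-\lambda c}<\infty$ (taking $\lambda c>1$) and the usual monotonicity in $h$ between dyadic scales complete the proof of (\ref{holder-2-1}).

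For part (2), I would additionally take the supremum over $t \in [0,T]$. Cover $[0,T]$ by $N_n \asymp h_n^{-1}$ subintervals $[\tau_k, \tau_k + h_n]$ and on each one run exactly the argument above, but with $u = c\log h_n^{-1} \asymp n$ in place of $c\log\log h_n^{-1}$. The union bound over $k$ introduces a factor $N_n$ that is easily absorbed by choosing the constant $\lambda c$ in (\ref{exponential-2-1})--(\ref{exponential-2-2}) large enough that $N_n \exp(-\lambda c \log h_n^{-1})$ is summable in $n$. Monotonicity in $h$ between dyadic scales then yields (\ref{holder-2-2}).

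The main obstacle, and the step requiring the most care, is the chaining construction that converts (\ref{exponential-2-1}) and (\ref{exponential-2-2}) into a bound on $\sup_{x\in\R^d} L(x,B_n)$: one must control the diameter of the effective spatial region using Lemma \ref{cont-prob}, choose the net mesh $\rho_n$ so that (\ref{exponential-2-2}) applied across adjacent nodes produces the same $h$-power as (\ref{exponential-2-1}), and verify that the $u$-exponent $D(H,\alpha)=\gamma+H(d+2\gamma)/\alpha$ fits the Borel--Cantelli threshold. The bookkeeping is identical to Xiao \cite[Theorem 3]{xiao}, except that the exponent $C(H,\alpha)$ there is replaced by $D(H,\alpha)$, which is strictly smaller and therefore causes no new difficulty.
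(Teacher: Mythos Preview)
Your proposal is correct and follows exactly the approach the paper intends: the paper gives no separate proof of Theorem \ref{main-thm-2-2} at all, relying on the reader to re-run the proof of Theorem \ref{main-thm-2} with Lemma \ref{tail-prob-2} in place of Lemma \ref{tail-prob}, which is precisely what you propose. One small imprecision worth noting: the radius $r_n \asymp h_n^{H/\alpha}(\log n)^{H/\alpha}$ you suggest for the range control gives, via Lemma \ref{cont-prob}, a tail of order $(\log n)^{-1}$ which is not summable; in the actual chaining one takes $r_n$ a slightly larger polynomial in $n$ (e.g.\ $h_n^{H/\alpha}n^{2H/\alpha}$), but since the number of net points is then still polynomial in $n$ and the local-time tail bounds are exponential, this does not affect the outcome or the final exponent $dH/\alpha$.
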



The H\"{o}lder conditions for the local time of a stochastic process
$X(t)$ are closely related to the irregularity of the sample paths
of $X(t)$ (cf. Berman \cite{Berman69}). In the following, we will
apply Theorems \ref{main-thm-2} and \ref{main-thm-2-2} to derive results about
the degree of oscillation of the sample paths of $X(t)$.

\begin{theorem}\label{main-thm-4}
Suppose $Y$ is not a stable subordinator. Let $X=
\{X(t),\,t\in \RR{R}_{+}\}$ be an $\alpha$-time fractional
Brownian motion in $\R^d$ with $H < \alpha$. For any $\tau\in
\RR{R}_{+}$, there exists a finite constant $K>0$ such that
\begin{equation}\label{osc-bound-1}
\liminf_{r\to 0}\sup_{s\in B(\tau ,r)}
\frac{|X(s)-X(\tau)|}{r^{H/\alpha}/(\log \log
1/r)^{H(1+1/\alpha)}}\geq K \ \ \ \  a.s.
\end{equation}
For any interval $T\subset \RR{R}_{+} $
\begin{equation}\label{osc-bound-2}
\liminf_{r\to 0}\inf_{t\in T}\sup_{s\in B(t ,r)}
\frac{|X(s)-X(\tau)|}{r^{H/\alpha}/(\log 1/r)^{H(1+1/\alpha)}}
\geq K \ \ \ \ a.s.
\end{equation}
In particular, $X(t)$ is almost surely nowhere differentiable
in $\RR{R}_{+}$.
\end{theorem}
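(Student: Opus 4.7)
The plan is to convert the upper bounds on the maximum local time from Theorem \ref{main-thm-2} into lower bounds on oscillation, using the occupation density identity. This classical strategy goes back to Berman \cite{Berman69} and has been used in Xiao \cite{xiao} for iterated Brownian motion. The attractive feature here is that the method does not require any sample path regularity of $X$, which is important since $X$ has discontinuous paths when $\alpha < 2$.

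First, fix $\tau \geq 0$ and small $r>0$, set $B_r := B(\tau,r) \cap \R_+$, and write
\[
R_r := \sup_{s \in B_r} |X(s) - X(\tau)|.
\]
The key geometric observation is tautological: $X(B_r) \subseteq \overline{B(X(\tau), R_r)}$ by the very definition of $R_r$. Hence the occupation measure $\mu_{B_r}$ is concentrated on a Euclidean ball of radius $R_r$ in $\R^d$. Since $d<\alpha/H$, Theorem \ref{main-thm-1} gives the jointly continuous local time $L(\cdot, B_r) = d\mu_{B_r}/d\lambda_d$, so writing $L^*(B_r) := \sup_{x \in \R^d} L(x,B_r)$ and letting $c_d$ denote the volume of the unit ball in $\R^d$, one obtains
\[
|B_r| \;=\; \mu_{B_r}(\R^d) \;=\; \int_{\overline{B(X(\tau),R_r)}} L(x,B_r)\, dx \;\leq\; c_d\, L^*(B_r)\, R_r^d,
\]
which rearranges to the fundamental lower bound $R_r \geq \bigl(|B_r|/(c_d L^*(B_r))\bigr)^{1/d}$.

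The proof of (\ref{osc-bound-1}) is then a direct substitution: Theorem \ref{main-thm-2}(1) asserts that almost surely, for sufficiently small $r$,
\[
L^*(B_r) \;\leq\; K\, r^{1-dH/\alpha}\,(\log\log 1/r)^{dH(1+1/\alpha)},
\]
so the fundamental bound gives $R_r \gtrsim r^{H/\alpha}\,(\log\log 1/r)^{-H(1+1/\alpha)}$, which is exactly (\ref{osc-bound-1}). For the uniform version (\ref{osc-bound-2}), I would repeat verbatim with Theorem \ref{main-thm-2}(2) in place of (1), replacing $\log\log 1/r$ by $\log 1/r$. Finally, nowhere differentiability follows immediately from (\ref{osc-bound-2}): at any hypothetical point of differentiability $\tau$ one would have $R_r = O(r)$, but $H < \alpha$ gives $H/\alpha < 1$, so the lower bound grows strictly faster than $r$ (even absorbing the logarithmic correction), a contradiction.

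The argument is essentially mechanical once Theorem \ref{main-thm-2} is available, and there is no serious obstacle. The only point that might appear worrying at first sight---namely, that $X$ has jumps---is a non-issue because the inclusion $X(B_r) \subseteq \overline{B(X(\tau),R_r)}$ is by definition of $R_r$; no continuity of sample paths is invoked anywhere.
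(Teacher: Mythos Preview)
Your approach is the same as the paper's---convert the local-time upper bounds of Theorem~\ref{main-thm-2} into oscillation lower bounds via the occupation density identity---but there is a genuine gap in your reduction. You write ``Since $d<\alpha/H$, Theorem~\ref{main-thm-1} gives the jointly continuous local time,'' yet the hypothesis of Theorem~\ref{main-thm-4} is only $H<\alpha$, i.e.\ $1<\alpha/H$, not $d<\alpha/H$. For $d\ge 2$ it can perfectly well happen that $1<\alpha/H\le d$, in which case the $d$-dimensional local time does not exist and your inequality $|B_r|\le c_d\,L^*(B_r)\,R_r^d$ is unavailable.

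The paper closes this gap with a one-line reduction: it suffices to treat $d=1$, because $|X(s)-X(\tau)|\ge |X_1(s)-X_1(\tau)|$ and $X_1$ is itself a real-valued $\alpha$-time fBm to which Theorem~\ref{main-thm-2} applies (the condition $1<\alpha/H$ is exactly $H<\alpha$). After that reduction your argument and the paper's coincide; in $d=1$ the paper writes the occupation identity as
\[
\lambda_1(Q)=\int_{\overline{X(Q)}}L(x,Q)\,dx\le L^*(Q)\,\sup_{s,t\in Q}|X(s)-X(t)|,
\]
which is your inequality with the ball volume replaced by the interval length. So: insert the reduction to $d=1$ at the outset, drop the unjustified claim $d<\alpha/H$, and the rest of your proof goes through.
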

\begin{proof}
Clearly, it is sufficient to consider the case of $d=1$,
where the condition of Theorem \ref{main-thm-2}
(i.e. $1<\alpha/H$) is fulfilled.
For any interval $Q\subset \RR{R}_{+}$,
\begin{eqnarray}\label{sup-bound}
\lambda_{1}(Q)=\int_{\overline{X(Q)}}L(x,Q)dx
\leq  L^{*}(Q) \bigg( \sup_{s,t\in Q}|X(s)-X(t)|\bigg).
\end{eqnarray}
Let $Q=B(\tau,r)$. Then (\ref{osc-bound-1}) follows
immediately from (\ref{holder-1}) and (\ref{sup-bound}).
Similarly (\ref{osc-bound-2}) follows from (\ref{holder-2})
and (\ref{sup-bound}).
\end{proof}

\begin{remark}
Theorem \ref{main-thm-4} extends partially the results
obtained by Nane \cite{nane-alpha}.
\end{remark}

\begin{theorem}\label{main-thm-2-4}
Suppose $Y$ is a stable subordinator of index $\alpha<1$.
Let $X= \{X(t), t\in \RR{R}_{+}\}$ be $\alpha$-time fractional
Brownian motion in $\R^d$ with $H<\alpha$. For any $\tau\in \RR{R}_{+}$,
there exists a finite constant $K>0$ such that
\begin{equation}\label{osc-bound-2-1}
\liminf_{r\to 0}\sup_{s\in B(\tau ,r)}
\frac{|X(s)-X(\tau)|}{r^{H/\alpha}/(\log \log
1/r)^{H/\alpha}}\geq K \ \ \ \  a.s.
\end{equation}
For any interval $T\subset \RR{R}_{+} $
\begin{equation}\label{osc-bound-2-2}
\liminf_{r\to 0}\inf_{t\in T}\sup_{s\in B(t ,r)}
\frac{|X(s)-X(\tau)|}{r^{H/\alpha}/(\log
1/r)^{H/\alpha}}\geq K \ \ \ \  a.s.
\end{equation}
In particular, $X(t)$ is almost surely nowhere differentiable
in $\RR{R}_{+}$.
\end{theorem}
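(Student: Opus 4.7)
The plan is to mimic the proof of Theorem \ref{main-thm-4}, replacing the H\"older estimates for the local time from Theorem \ref{main-thm-2} by their subordinator analogues from Theorem \ref{main-thm-2-2}. The whole argument rests on the occupation-density inequality \eqref{sup-bound}: for any interval $Q\subset\R_+$,
\begin{equation*}
\lambda_1(Q)=\int_{\overline{X(Q)}} L(x,Q)\,dx \le L^*(Q)\,\sup_{s,t\in Q}|X(s)-X(t)|,
\end{equation*}
which turns an upper bound on the maximum local time into a lower bound on the oscillation.

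First I would reduce to the case $d=1$. Since $|X(s)-X(\tau)|\ge |X_1(s)-X_1(\tau)|$, any almost-sure lower bound on the oscillation of the first coordinate $X_1=\{W_1(Y_1(t))\}$ transfers immediately to $X$. With $d=1$, the standing hypothesis $H<\alpha$ is exactly $d<\alpha/H$, so Theorem \ref{main-thm-2-2} applies to $X_1$. Write $L$ for the local time of $X_1$ and $L^*(Q)=\sup_{x\in\R}L(x,Q)$.

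For \eqref{osc-bound-2-1}, I would take $Q=B(\tau,r)$ so that $\lambda_1(Q)=2r$. The occupation-density bound together with the elementary inequality $\sup_{s,t\in Q}|X_1(s)-X_1(t)|\le 2\sup_{s\in Q}|X_1(s)-X_1(\tau)|$ gives
\begin{equation*}
\sup_{s\in B(\tau,r)}|X_1(s)-X_1(\tau)|\ge \frac{r}{L^*(B(\tau,r))}.
\end{equation*}
By \eqref{holder-2-1} of Theorem \ref{main-thm-2-2}, almost surely
\begin{equation*}
\limsup_{r\to 0}\frac{L^*(B(\tau,r))}{r^{1-H/\alpha}(\log\log 1/r)^{H/\alpha}}\le K',
\end{equation*}
and dividing yields \eqref{osc-bound-2-1}. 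The uniform version \eqref{osc-bound-2-2} follows in exactly the same way from \eqref{holder-2-2}, applied with $Q=B(t,r)$ and an infimum over $t\in T$.

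Finally, the nowhere-differentiability statement is an immediate consequence of \eqref{osc-bound-2-2}: since $H<\alpha$ gives $1-H/\alpha>0$, the ratio $r^{H/\alpha}/[r(\log 1/r)^{H/\alpha}]\to\infty$ as $r\to 0$, so with probability one $\sup_{s\in B(t,r)}|X(s)-X(t)|/r\to\infty$ uniformly in $t\in T$. There is no genuine obstacle here beyond correctly bookkeeping the exponents supplied by Theorem \ref{main-thm-2-2}; the only place one must be careful is the reduction to $d=1$, since Theorem \ref{main-thm-2-2} was stated under $d<\alpha/H$, and one must verify that the stronger hypothesis $H<\alpha$ in Theorem \ref{main-thm-2-4} covers this for $d=1$.
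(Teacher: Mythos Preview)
Your proposal is correct and follows exactly the approach the paper intends: the paper gives no separate proof of Theorem \ref{main-thm-2-4}, but it is clearly meant to be read as the subordinator analogue of Theorem \ref{main-thm-4}, obtained by replacing the local-time H\"older estimates of Theorem \ref{main-thm-2} with those of Theorem \ref{main-thm-2-2} and combining them with the occupation-density inequality \eqref{sup-bound}. Your reduction to $d=1$ via $|X(s)-X(\tau)|\ge |X_1(s)-X_1(\tau)|$ makes explicit what the paper states in one line (``it is sufficient to consider the case $d=1$''), and your verification that $H<\alpha$ gives $d<\alpha/H$ for $d=1$ is precisely the point the paper also singles out.
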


\noindent{\bf Acknowledgments.}  Authors  would like to thank the two referees for their  comments and corrections  that helped improve the paper.

\section{Appendix}\label{sec:Appendix}

As an appendix, we provide the following lemmas,  which are used in the
proofs of our main results in Section \ref{sec:continuity}.
Lemma \ref{XIAO-lemma} is from Xiao \cite{xiao}, which is
used to prove Lemma \ref{mainlemma}.

\begin{lemma}\label{XIAO-lemma}
Let $0< \gamma <1$ be a constant. Then for any integer $n\geq 1$ and
any $x_{1},\cdots ,x_{n}\in \RR{R}$, we have
$$
\int_{0}^{1}\frac{1}{\min \{|x-x_{j}|^{\gamma}, j=1,\cdots,n\}} \,
dx \leq K\, n^{\gamma},
$$
where $K>0$ is a finite constant depending only on $\gamma$.
\end{lemma}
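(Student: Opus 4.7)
The plan is to reduce the problem to a simple distribution-function estimate. First observe that
$$
\frac{1}{\min\{|x-x_j|^\gamma : 1 \le j \le n\}} = \max_{1 \le j \le n} \frac{1}{|x-x_j|^\gamma},
$$
so the integrand is the pointwise maximum of $n$ positive functions. Rather than bound this by the sum $\sum_j |x-x_j|^{-\gamma}$ (which would only give $O(n)$, too weak), I would apply the layer-cake formula
$$
\int_0^1 f(x)\, dx = \int_0^\infty \lambda_1\bigl\{x \in [0,1] : f(x) > \lambda\bigr\}\, d\lambda
$$
to $f(x) = \min_j |x-x_j|^{-\gamma}$.

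The point is that $f(x) > \lambda$ holds if and only if $\min_j |x-x_j| < \lambda^{-1/\gamma}$, and the latter set is contained in the union $\bigcup_{j=1}^n (x_j - \lambda^{-1/\gamma},\, x_j + \lambda^{-1/\gamma})$. Therefore
$$
\lambda_1\bigl\{x \in [0,1] : f(x) > \lambda\bigr\} \le \min\bigl(1,\, 2n\lambda^{-1/\gamma}\bigr).
$$
Next I would split the $d\lambda$ integral at the threshold $\lambda_0 = (2n)^\gamma$ where the two bounds in the minimum cross, giving
$$
\int_0^1 f(x)\, dx \le \int_0^{(2n)^\gamma} 1\, d\lambda + \int_{(2n)^\gamma}^\infty 2n\, \lambda^{-1/\gamma}\, d\lambda.
$$
Since $0 < \gamma < 1$ forces $1/\gamma > 1$, the tail integral converges and a direct computation yields $(2n)^\gamma \cdot \gamma/(1-\gamma)$; combining gives the total bound $(2n)^\gamma/(1-\gamma) = K n^\gamma$ with $K = 2^\gamma/(1-\gamma)$.

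There is no genuine obstacle here: the only conceptual point is that one must not expand the maximum as a sum but instead measure the sublevel sets, after which the hypothesis $\gamma < 1$ guarantees that the relevant tail integral is finite and produces the correct exponent $n^\gamma$ (rather than $n$). All other manipulations are straightforward calculus.
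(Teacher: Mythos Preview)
Your argument is correct. The layer-cake reduction is the right idea: controlling the measure of the superlevel sets by $\min(1,\,2n\lambda^{-1/\gamma})$ and splitting at $\lambda_0=(2n)^\gamma$ gives exactly $(2n)^\gamma/(1-\gamma)$, with the hypothesis $\gamma<1$ used precisely to make the tail integral converge.

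As for comparison: the paper does not actually prove this lemma but simply quotes it from Xiao \cite{xiao}. The argument there proceeds differently---one orders the points, partitions $[0,1]$ into the Voronoi subintervals determined by the $x_j$, integrates $|x-x_j|^{-\gamma}$ over each piece to get a bound $C\ell_i^{1-\gamma}$ in terms of the subinterval length $\ell_i$, and then applies the concavity of $t\mapsto t^{1-\gamma}$ (Jensen) to $\sum_i \ell_i^{1-\gamma}$ with $\sum_i \ell_i\le 1$. Your distribution-function approach is shorter and avoids the ordering/partition step entirely; it also makes the dependence of the constant on $\gamma$ completely explicit. Both methods use $\gamma<1$ in the same essential way (integrability of $t^{-\gamma}$ near $0$, equivalently convergence of the tail), so neither is more general in scope.
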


\begin{lemma}\label{mainlemma}
Let $0< \gamma <1$ be a constant. Then for any integer $n\geq 1$ and
any $x_{1}, \cdots ,x_{n}\in \RR{R}$, we have
\begin{equation}\label{mainlemma-eqn}
\int_{\RR{R}}\frac{p_{1}(x)} {\min \{|x-x_{j}|^{\gamma},
j=1,\cdots,n\}}\, dx\leq K\, n^{\gamma},
\end{equation}
where $K>0$ is a finite constant depending only on $\gamma$ and
$\alpha$.
\end{lemma}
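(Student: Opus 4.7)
The plan is to reduce the integral against $p_1$ to the weight-free estimate in Lemma \ref{XIAO-lemma} by decomposing $\R$ into unit intervals and using the decay of the stable density $p_1$ at infinity. First, I would recall the well-known fact that, for a strictly $\alpha$-stable density, $p_1$ is bounded on $\R$ and satisfies a tail bound of the form $p_1(x)\le C(1+|x|)^{-(\alpha+1)}$ for $0<\alpha<2$ (and $p_1(x)\le Ce^{-x^2/4}$ when $\alpha=2$); see Sato \cite{sato}. The key point I need is only that
\[
M_k := \sup_{x\in[k,k+1)} p_1(x) \quad\text{satisfies}\quad \sum_{k\in\Z} M_k < \infty.
\]

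Next I would split the integral dyadically by unit intervals:
\[
\int_{\R}\frac{p_1(x)}{\min_{1\le j\le n}|x-x_j|^{\gamma}}\,dx
=\sum_{k\in\Z}\int_{k}^{k+1}\frac{p_1(x)}{\min_{1\le j\le n}|x-x_j|^{\gamma}}\,dx.
\]
On the $k$-th piece I would replace $p_1(x)$ by $M_k$ and change variables $y=x-k$, which sends the configuration $\{x_j\}$ to $\{x_j-k\}$ without altering the form of the integrand. Since the shifted points are still real numbers, Lemma \ref{XIAO-lemma} applies and gives
\[
\int_{k}^{k+1}\frac{dx}{\min_{1\le j\le n}|x-x_j|^{\gamma}}
=\int_{0}^{1}\frac{dy}{\min_{1\le j\le n}|y-(x_j-k)|^{\gamma}}
\le K\,n^{\gamma},
\]
with $K$ depending only on $\gamma$.

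Summing over $k$ and pulling $Kn^{\gamma}$ outside yields
\[
\int_{\R}\frac{p_1(x)}{\min_{1\le j\le n}|x-x_j|^{\gamma}}\,dx
\le K\,n^{\gamma}\sum_{k\in\Z} M_k,
\]
and absorbing $\sum_k M_k$ into the constant finishes the proof. The only non-routine step is the uniform bound $\sum_k M_k<\infty$, which is exactly where the stability index $\alpha$ enters the constant in \eqref{mainlemma-eqn}; this is standard, but it is the one ingredient outside Lemma \ref{XIAO-lemma} that drives the argument. Everything else is a bookkeeping translation plus a direct application of the previous lemma on each unit interval.
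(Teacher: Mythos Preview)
Your proof is correct and follows essentially the same approach as the paper: decompose $\R$ into unit intervals, bound $p_1$ by its supremum on each, apply Lemma~\ref{XIAO-lemma} after translation, and use the tail asymptotics of the stable density to ensure $\sum_k M_k<\infty$. The only cosmetic difference is that the paper treats the bounded region $[-M,M]$ and the tails $|l|>M$ separately (listing the Skorokhod asymptotics case by case), whereas you sum uniformly over all $k\in\Z$; your version is in fact slightly cleaner, and your remark that the $\alpha$-dependence of the constant enters only through the summability of $(M_k)$ is exactly the point.
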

\begin{proof}\
We recall the following asymptotic bounds from  \cite{skorokhod} for
the stable density function  $p_{1}(x)$ as $x\to\infty$ (the
asymptotics for the case $x \to -\infty$ are obtained by changing
$x$ to $-x$). For $0 < \alpha <1$:
$$
p_{1}(x)\leq K\, x^{-(1+\alpha)}, \ \ \hbox{ as } \ \ x\to\infty.
$$
For $\alpha=1$ and $\beta = 0$ (this is the symmetric Cauchy case):
$$
p _{1}(x)\leq K\,  x^{-2}\ \ as \ \ x\to\infty .
$$
For $\alpha >1$ and $ -1<\beta <1$:
$$
p_{1}(x)\leq Kx^{-(1+\alpha)}, \ \ \hbox{ as } \ \ x\to\infty .
$$
For $\alpha >1$ and $\beta=-1, 1$:
$$
p_{1}(x)\leq K\, \max  \Big\{ x^{-(1+\alpha)},\,
x^{-1+\alpha/2(\alpha -1)}
\exp\big(-c(\alpha)x^{\alpha/(\alpha-1)}\big) \Big\},
\ \ \hbox{ as } \ \ x \to \infty.
$$

Now we observe that the left-hand side of (\ref{mainlemma-eqn})
can be written as
\begin{equation}\label{L1}
\begin{split}
&\sum_{l\in \RR{Z}} \int_{l}^{l+1} \frac{p_{1}(x)}
{\min \{|x-x_{j}|^{\gamma}, j=1,\cdots,n\}}\, dx \\
&\leq \max_{|x|\le M} p_{1}(x)\, \int_{-M}^{M}
\frac{1} {\min \{|x-x_{j}|^{\gamma}, j=1,\cdots,n\}}\, dx \\
& \qquad + \sum_{|l|>M} \max_{l\leq x \leq l+1 } p_{1}(x)\,
\int_{0}^{1}\frac{1}{\min \{|x+l-x_{j}|^{\gamma}, j=1,\cdots,n\}}\,
dx.
\end{split}
\end{equation}
It can be verified that Equation (\ref{mainlemma-eqn}) follows from
(\ref{L1}), the asymptotics of $p_{1}(x)$ and Lemma
\ref{XIAO-lemma}.
\end{proof}

Lemma \ref{iterated-time} is taken from Ehm \cite{ehm} and
Lemma \ref{Lem:Cuzick} is due to Cuzick and DuPreez
\cite{CD82} (the current form is from Khoshnevisan and Xiao
\cite{KX04}).

\begin{lemma}\label{iterated-time}
For any integer $n\geq 1$, and $\beta_j\in (0,1)$ for
$1\leq j\leq n$, for all $h>0$, we have
\[
\begin{split}
&\int_{0\leq x_1\leq x_2\leq \cdots\leq x_n\leq h}
\prod_{j=1}^n\frac{1}{(x_j-x_{j-1})^{\beta_j}}dx_1\cdots dx_n\\
&\ \qquad =h^{n-\sum_{j=1}^n\beta_j}\frac{\prod_{j=1}^n
\Gamma (1-\beta_j)}{\Gamma(1+n-\sum_{j=1}^n\beta_j)}.
\end{split}
\]
\end{lemma}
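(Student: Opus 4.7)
The plan is to reduce the multidimensional integral to a standard Dirichlet-type integral via two changes of variables, and then either invoke the classical Dirichlet formula or close the computation by induction on $n$ using a beta-function identity.

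First, I would introduce the simplex coordinates $u_j = x_j - x_{j-1}$ for $j=1,\dots,n$ (with $x_0 = 0$). This map is a volume-preserving bijection from the ordered simplex $\{0 \le x_1 \le \cdots \le x_n \le h\}$ onto $\{u_j \ge 0,\ \sum_{j=1}^n u_j \le h\}$, and it replaces the product $\prod_j (x_j - x_{j-1})^{-\beta_j}$ by $\prod_j u_j^{-\beta_j}$. So the integral becomes
\[
\int_{\{u_j \ge 0,\ \sum u_j \le h\}} \prod_{j=1}^n u_j^{-\beta_j}\, du_1 \cdots du_n.
\]

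Next, I would rescale by setting $u_j = h v_j$. This pulls out a factor of $h^n$ from the Jacobian and $h^{-\sum_j \beta_j}$ from the integrand, so the integral equals $h^{n - \sum_j \beta_j}$ times the same integral over the unit simplex $\{v_j \ge 0,\ \sum v_j \le 1\}$. It therefore suffices to prove
\[
I_n := \int_{\{v_j \ge 0,\ \sum v_j \le 1\}} \prod_{j=1}^n v_j^{-\beta_j}\, dv_1 \cdots dv_n \;=\; \frac{\prod_{j=1}^n \Gamma(1-\beta_j)}{\Gamma\!\bigl(1+n - \sum_{j=1}^n \beta_j\bigr)}.
\]

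For this I would argue by induction on $n$. The base case $n=1$ is the direct computation $\int_0^1 v_1^{-\beta_1} dv_1 = (1-\beta_1)^{-1} = \Gamma(1-\beta_1)/\Gamma(2-\beta_1)$. For the inductive step, I would integrate out $v_n$ last: fixing $v_n \in [0,1]$, the remaining $(v_1,\dots,v_{n-1})$ range over a simplex of ``radius'' $1 - v_n$, and a rescaling by $1 - v_n$ produces
\[
I_n = I_{n-1} \int_0^1 v_n^{-\beta_n} (1-v_n)^{(n-1) - \sum_{j<n}\beta_j}\, dv_n = I_{n-1}\, B\!\Bigl(1-\beta_n,\ n - \sum_{j<n}\beta_j\Bigr),
\]
where the last factor is a standard beta function (the exponent $n - \sum_{j<n}\beta_j > 0$ because each $\beta_j < 1$). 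Substituting $B(p,q) = \Gamma(p)\Gamma(q)/\Gamma(p+q)$ makes the $\Gamma(n - \sum_{j<n}\beta_j)$ in the inductive formula cancel cleanly, yielding the claimed expression.

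There is no real obstacle here; the main point requiring attention is verifying that the parameters remain in the admissible ranges at each inductive step (each $1-\beta_j \in (0,1)$, and $n - \sum_{j<n}\beta_j > 0$), which is automatic from the hypothesis $\beta_j \in (0,1)$. Alternatively, one can bypass the induction by citing Dirichlet's integral formula in the form $\int_{\{v_j \ge 0,\ \sum v_j \le 1\}} \prod_j v_j^{a_j - 1}\, dv = \prod_j \Gamma(a_j)/\Gamma(1 + \sum_j a_j)$ with $a_j = 1 - \beta_j$, which gives the same answer in one line.
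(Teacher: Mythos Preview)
Your argument is correct: the change to increment variables $u_j = x_j - x_{j-1}$ followed by the rescaling $u_j = h v_j$ cleanly reduces the integral to the standard Dirichlet integral over the unit simplex, and both your inductive computation via the beta function and the direct appeal to Dirichlet's formula with $a_j = 1-\beta_j$ are valid. The parameter checks you flag (each $1-\beta_j>0$ and $n-\sum_{j<n}\beta_j>0$) are exactly what is needed for convergence at each step.

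As for comparison with the paper: the paper does not prove this lemma at all but simply attributes it to Ehm \cite{ehm}. So your write-up is strictly more informative than what appears in the paper; there is no alternative approach to compare against, and your self-contained proof would be a welcome addition rather than a departure from the authors' method.
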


\begin{lemma}
\label{Lem:Cuzick} Let $\xi_1, \ldots, \xi_n$ be mean zero Gaussian
variables which are linearly independent, then for any nonnegative
function $g:\,\R\rightarrow\R_+$,
\begin{equation}\label{Eq:Cuzick}
\begin{split}
&\int_{{\R}^n} g(v_1) \exp \biggl[- \frac1 2 {\rm
Var}\bigg(\sum_{j=1}^n v_j \xi_j\bigg)
\biggr]\, dv_1 \cdots dv_n \\
&\ \qquad = \frac{(2 \pi)^{(n-1)/2}} {({\rm detCov}(\xi_1, \cdots,
\xi_n))^{1/2}}\ \int_{- \infty}^{\infty} g\Bigl(\frac {v} {
\sigma_1}\Bigr)\, e^{- v^2/2}\, dv ,
\end{split}
\end{equation}
where ${\rm detCov}(\xi_1, \cdots, \xi_n)$ denotes the determinant
of the covariance matrix of the Gaussian random vector $(\xi_1,
\ldots, \xi_n)$, and where $\sigma_1^2 = {\rm Var}(\xi_1|\xi_2,
\ldots, \xi_n)$ is the conditional variance of $\xi_1$ given
$\xi_2,$ $ \ldots, \xi_n$.
\end{lemma}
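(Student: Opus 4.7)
The identity is a purely Gaussian integral identity, so the plan is to reduce it to a one--dimensional integral in $v_1$ by carrying out the $(n-1)$--dimensional Gaussian integral in the remaining variables, using the block structure of the covariance matrix and the Schur complement.

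Write $\Sigma=({\rm Cov}(\xi_i,\xi_j))_{i,j=1}^n$, which is positive definite by the linear independence hypothesis, so that
\[
{\rm Var}\Big(\sum_{j=1}^n v_j\xi_j\Big)=v^{T}\Sigma v,\qquad v=(v_1,\dots,v_n)^T.
\]
Decompose
\[
\Sigma=\begin{pmatrix}\sigma_{11} & b^{T}\\[1mm] b & \Sigma_{22}\end{pmatrix},
\]
where $\sigma_{11}={\rm Var}(\xi_1)$, $b=({\rm Cov}(\xi_1,\xi_j))_{j=2}^{n}$, and $\Sigma_{22}$ is the covariance matrix of $(\xi_2,\dots,\xi_n)$. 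The key algebraic step is to complete the square in $w=(v_2,\dots,v_n)^{T}$: setting $\tilde w=w+v_1\Sigma_{22}^{-1}b$ yields
\[
v^{T}\Sigma v=\big(\sigma_{11}-b^{T}\Sigma_{22}^{-1}b\big)v_1^{2}+\tilde w^{T}\Sigma_{22}\tilde w.
\]
Here I would invoke the standard Gaussian regression identity, namely that the Schur complement $\sigma_{11}-b^{T}\Sigma_{22}^{-1}b$ equals the conditional variance ${\rm Var}(\xi_1\mid\xi_2,\dots,\xi_n)=\sigma_1^{2}$ appearing in the statement, and that the block determinant identity gives $\det\Sigma=\sigma_1^{2}\,\det\Sigma_{22}$.

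With these two identifications, by Fubini and translation invariance of Lebesgue measure in the $\tilde w$ variable,
\[
\int_{\R^n}g(v_1)\exp\!\Big[-\tfrac12 v^{T}\Sigma v\Big]dv
=\Big(\int_\R g(v_1)e^{-\sigma_1^{2}v_1^{2}/2}\,dv_1\Big)\cdot
\int_{\R^{n-1}}e^{-\tilde w^{T}\Sigma_{22}\tilde w/2}d\tilde w.
\]
The inner Gaussian integral evaluates to $(2\pi)^{(n-1)/2}/\sqrt{\det\Sigma_{22}}$. Finally I would substitute $v=\sigma_1 v_1$ in the remaining one--dimensional integral and combine
\[
\frac{1}{\sigma_1\sqrt{\det\Sigma_{22}}}=\frac{1}{\sqrt{\det\Sigma}}
\]
to obtain exactly the right--hand side of \eqref{Eq:Cuzick}.

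There is really no serious obstacle here: the proof is just linear algebra plus the Gaussian integral $\int e^{-x^{T}Ax/2}\,dx=(2\pi)^{k/2}/\sqrt{\det A}$. The only point that deserves care is making sure the two nontrivial identifications --- Schur complement $=$ conditional variance, and $\det\Sigma=\sigma_1^{2}\det\Sigma_{22}$ --- are cited cleanly so that the final coefficient matches. Everything else is bookkeeping.
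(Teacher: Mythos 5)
Your argument is correct, and it is worth noting that the paper itself offers no proof of this lemma: it is quoted verbatim from Cuzick and DuPreez \cite{CD82}, in the form given by Khoshnevisan and Xiao \cite{KX04}. So there is no "paper approach" to compare against; what you have written is a clean, self-contained verification of a cited result. The three ingredients you isolate are exactly the right ones: the completion of the square in $w=(v_2,\dots,v_n)$, the identification of the Schur complement $\sigma_{11}-b^{T}\Sigma_{22}^{-1}b$ with the conditional variance $\sigma_1^2={\rm Var}(\xi_1\mid \xi_2,\dots,\xi_n)$, and the block determinant identity $\det\Sigma=\sigma_1^2\det\Sigma_{22}$. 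The only point I would ask you to make explicit is the justification for splitting the integral as a product: since $g$ is merely nonnegative (not assumed integrable), both sides of \eqref{Eq:Cuzick} may equal $+\infty$, and the interchange of the order of integration together with the translation $w\mapsto \tilde w$ for each fixed $v_1$ should be justified by Tonelli's theorem rather than Fubini's; this is also why the nonnegativity hypothesis on $g$ is the natural one and why the identity holds with no integrability assumption. With that one-word fix, the proof is complete and the coefficient bookkeeping, including the cancellation $\sigma_1\sqrt{\det\Sigma_{22}}=\sqrt{\det\Sigma}$, comes out exactly as in the statement.
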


\bigskip

\begin{quote}
\begin{small}

 \noindent \textsc{Erkan Nane}.\
Department of Mathematics and Statistics\\
Auburn University\\
221 Parker Hall, Auburn, AL 36849\\
E-mail: \texttt{nane@stt.msu.edu}\\
URL: \texttt{www.duc.auburn.edu/\~{}ezn0001/}\\

 \noindent \textsc{Dongsheng Wu}.\
        Department of Mathematical Sciences, 201J Shelby Center,
        University of Alabama in Huntsville,
        Huntsville, AL 35899, U.S.A.\\
        E-mail: \texttt{dongsheng.wu@uah.edu}\\
        URL: \texttt{http://webpages.uah.edu/\~{}dw0001}\\

  \noindent \textsc{Yimin Xiao}.\
        Department of Statistics and Probability, A-413 Wells
        Hall, Michigan State University,
        East Lansing, MI 48824, U.S.A.\\
        E-mail: \texttt{ xiao@stt.msu.edu}\\
        URL: \texttt{http://www.stt.msu.edu/\~{}xiaoyimi}
\end{small}
\end{quote}

\end{document}